\theoremstyle{plain}
\newtheorem{thm}{Theorem}[section]
\newtheorem{cor}[thm]{Corollary}
\newtheorem{pro}[thm]{Proposition}
\newtheorem{lem}[thm]{Lemma}
\theoremstyle{definition}
\newtheorem{ques}[thm]{Question}
\newtheorem{exa}[thm]{Example}
\newtheorem{rem}[thm]{Remark}
\def\mf#1{\mathfrak{#1}}
\def\mc#1{\mathcal{#1}}
\def\mb#1{\mathbb{#1}}
\def\tx#1{{\rm #1}}
\def\R{\mathbb{R}}
\def\C{\mathbb{C}}
\def\Q{\mathbb{Q}}
\def\A{\mathbb{A}}
\def\Z{\mathbb{Z}}
\def\N{\mathbb{N}}
\def\F{\mathbb{F}}
\def\ol#1{\overline{#1}}
\def\hat{\widehat}
\def\rw{\rightarrow}
\def\sm{\smallsetminus}
\DeclareMathOperator{\GL}{GL}
\DeclareMathOperator{\lcm}{lcm}
\DeclareMathOperator{\D}{D}
\DeclareMathOperator{\DM}{F}
\newcommand{\nid}{\noindent}
\newcommand{\op}{\oplus}
\begin{document}
\bibliographystyle{plain}


\title{\textbf{Quantifying residual finiteness\\ of arithmetic groups}}
\author{Khalid Bou-Rabee and Tasho Kaletha}
\maketitle


\begin{abstract}
The normal Farb growth of a group quantifies how well-approximated the group is by its finite quotients.
We show that any $S$-arithmetic subgroup of a higher rank Chevalley group $G$ has normal Farb growth $n^{\dim(G)}$.
\end{abstract}

\smallskip\smallskip

\nid keywords: \emph{Arithmetic groups, normal Farb growth, residual finiteness}

\section{Introduction}

The quantification of residual finiteness, begun in \cite{B10}, seeks to describe how well a residually finite group is approximated by its finite quotients. This is measured by the normal Farb growth of the group.
During a geometry seminar at Yale University in December 2009, Daniel Mostow asked the following question:

\begin{ques} (D.~Mostow)
Does asymptotic information of residual finiteness characterize arithmetic subgroups of a given linear algebraic group?
\end{ques}

\noindent This paper presents a first major step towards answering this question, by showing that in a fixed Chevalley group $G$, all $S$-arithmetic subgroups share the same normal Farb growth, and moreover this growth is $n^{\dim(G)}$. Note that for us, a Chevalley group will be a split simple algebraic group that is not necessarily simply-connected.

To state our results more precisely, we need some notation.
Let $\Gamma$ be a finitely generated, residually finite group, and let $X$ be a finite generating set for $\Gamma$. For $\gamma \in \Gamma$, let $\| \gamma \|_X$ denote the word length of $\gamma$ with respect to $X$.
Define
$$\D_\Gamma (\gamma) := \min \{ |Q| \::\: \text{ $Q$ is a finite quotient of $\Gamma$ where $\gamma \neq 1$}\},$$
and
$$\DM_{\Gamma,X}(n) := \max\{  \D_\Gamma (\gamma) \;:\;\| \gamma \|_X \leq n \}.$$
The function $\DM_{\Gamma,X}$ is called the \emph{normal Farb growth function}. It is known that the asymptotic behavior of $\DM_{\Gamma,X}$ is independent of $X$ (see Section \ref{PreliminariesSection}). The asymptotic growth of this function is called the \emph{normal Farb growth} of $\Gamma$.

The main results of this paper characterize the normal Farb growth of $S$-arithmetic groups in Chevalley groups. We use the term $S$-arithmetic subgroup of $G$ to denote any subgroup of $G(\C)$ which is commensurable with $G(\mc{O}_{K,f})$, where $K \subset \C$ is a number field, $\mc{O}_K$ is its ring of integers, and $f \in \mc{O}_K \sm \{0\}$. That is, it is an $S$-arithmetic subgroup of $G$ in the usual definition for some number field $K$ and some finite set $S$ of places of $K$ which contains the archimedean ones, but we allow $K$ and $S$ to vary.
The ingredients used include the structure theory of split semi-simple group schemes, results on the congruence subgroup problem, Moy-Prasad filtrations, Selberg's Lemma, the prime number theorem, and the Cebotar\"ev density theorem. Furthermore, we use in an essential way the results of Lubotzky-Mozes-Raghunathan \cite{LMR01}.

\begin{thm} \label{MainLowerBoundTheorem}
Let $G$ be a Chevalley group of rank at least $2$, $K$ be a number field, and $f \in \mc{O}_K \sm \{ 0 \}$. If $\Gamma$ is a finitely generated subgroup of $G(\C)$ with the property that $\Gamma \cap G(\mc{O}_{K,f})$ is of finite-index in $G(\mc{O}_{K,f})$, then its normal Farb growth is bounded below by $n^{\dim(G)}$.
\end{thm}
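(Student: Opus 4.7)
My plan is to exhibit, for each large $N$, an element $\gamma_N \in \Gamma$ with $\|\gamma_N\|_X \leq C N$ and $\D_\Gamma(\gamma_N) \geq c\, N^{\dim G}$, from which the theorem follows. The element will be a root-subgroup element $\gamma_N := x_\alpha(t_N)$ for a fixed root $\alpha$ of $G$, with $t_N \in \mathcal{O}_{K,f}$ chosen to be small at the archimedean places yet highly divisible by small primes. The three ingredients I combine are: (i) the Lubotzky--Mozes--Raghunathan theorem \cite{LMR01}, valid for $\rk G \geq 2$, which bounds $\|x_\alpha(t)\|_X$ logarithmically in the archimedean size of $t$; (ii) the congruence subgroup property, also valid for $\rk G \geq 2$, which ensures that every finite-index normal subgroup of $\Gamma$ contains a principal congruence subgroup $\Gamma(\mathfrak{b})$ up to a bounded congruence kernel; and (iii) the Moy--Prasad filtration of $G(\mathcal{O}/\mathfrak{p}^k)$ by congruence subgroups, whose successive quotients carry the irreducible adjoint action of $G(\mathbb{F}_{N(\mathfrak{p})})$.

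For the construction take $\mathfrak{m}_N := \prod_\mathfrak{p} \mathfrak{p}^{\lfloor \log N / \log N(\mathfrak{p}) \rfloor}$, the product over primes $\mathfrak{p}$ of $\mathcal{O}_{K,f}$ with $N(\mathfrak{p}) \leq N$. By the prime number theorem for number fields, $\log N(\mathfrak{m}_N) = N(1 + o(1))$. Fix an ideal $\mathfrak{c}$ of $\mathcal{O}_{K,f}$ such that $x_\alpha(t) \in \Gamma$ for every $t \in \mathfrak{c}$; such $\mathfrak{c}$ exists since $\Gamma$ is commensurable with $G(\mathcal{O}_{K,f})$. Minkowski's theorem applied to the lattice $\mathfrak{m}_N \cdot \mathfrak{c} \subset \mathcal{O}_K \otimes \mathbb{R}$ produces a nonzero $t_N$ with $|t_N|_v \leq C_0\, e^{N/[K:\mathbb{Q}]}$ at every archimedean place $v$, and $|t_N|_v \leq 1$ at every finite place (since $t_N \in \mathcal{O}_K$). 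Setting $\gamma_N := x_\alpha(t_N) \in \Gamma$, LMR then yields $\|\gamma_N\|_X \leq C_1 N$.

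Suppose $q \colon \Gamma \twoheadrightarrow Q$ is a finite quotient with $q(\gamma_N) \neq 1$. By CSP, $\ker q \supseteq \Gamma(\mathfrak{b})$ for some ideal $\mathfrak{b}$ (up to a bounded index), so $Q$ is a quotient of $\Gamma/\Gamma(\mathfrak{b}) \cong G(\mathcal{O}_{K,f}/\mathfrak{b})$ via strong approximation. Decomposing $G(\mathcal{O}/\mathfrak{b}) = \prod_{\mathfrak{p}|\mathfrak{b}} G(\mathcal{O}/\mathfrak{p}^{e_\mathfrak{p}})$, nontriviality of $q(\gamma_N)$ forces some prime $\mathfrak{p} | \mathfrak{b}$ with $v_\mathfrak{p}(t_N) < e_\mathfrak{p}$. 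Projecting further to $G(\mathcal{O}/\mathfrak{p}^{v_\mathfrak{p}(t_N)+1})$ still preserves nontriviality, and a standard analysis of the congruence filtration shows that the minimum quotient of this group in which the root-theoretic image of $\gamma_N$ survives has order $\gtrsim N(\mathfrak{p})^{(v_\mathfrak{p}(t_N)+1)\dim G}$. By our construction of $\mathfrak{m}_N$, $N(\mathfrak{p})^{v_\mathfrak{p}(t_N)+1} \geq N$, whence $|Q| \gtrsim N^{\dim G}$.

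The main obstacle I anticipate is this last step: bounding from below the minimum quotient of $G(\mathcal{O}/\mathfrak{p}^{v+1})$ preserving a given root element in the deepest congruence layer $K_v / K_{v+1}$. For $v = 0$ this reduces to the quasi-simplicity of $G(\mathbb{F}_q)$ together with the fact that root elements are never central. For $v \geq 1$ one must induct up the congruence filtration $K_{v+1} \subset K_v \subset \cdots \subset K_1 \subset G(\mathcal{O}/\mathfrak{p}^{v+1})$, using the irreducibility of the adjoint action of $G(\mathbb{F}_{N(\mathfrak{p})})$ on $\mathfrak{g}(\mathbb{F}_{N(\mathfrak{p})})$ modulo the bounded center to show that the normal closure of the given element is essentially all of $K_v$. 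Some care is required at small ramified primes and in tracking the finite congruence kernel alongside the finite-index commensurability of $\Gamma$ with $G(\mathcal{O}_{K,f})$, but each of these contributes only to the implicit constants and not to the exponent $\dim G$.
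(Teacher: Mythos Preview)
Your strategy matches the paper's closely: construct root elements $x_\alpha(t)$ with $t$ highly divisible by small primes, bound their word length via Lubotzky--Mozes--Raghunathan, and bound the size of any detecting quotient via CSP plus the congruence filtration. The paper first reduces to $G$ simply-connected and to $\Gamma$ a principal congruence subgroup of $G(\Z)$, then takes $t=r_k:=\alpha^k\,\lcm(1,\dots,k)$ with $\alpha$ the product of finitely many bad primes; your Minkowski-based choice of $t_N\in\mathfrak{m}_N\mathfrak{c}$ over $\mc{O}_{K,f}$ is a legitimate variant achieving the same arithmetic effect, and your key inequality $N(\mathfrak{p})^{v_\mathfrak{p}(t_N)+1}\geq N$ is correct.

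The step that needs substantially more work than you indicate is the inference ``whence $|Q|\gtrsim N^{\dim G}$'' from the bound at a single prime $\mathfrak{p}$. You have shown that $\gamma_N$ survives in $G(\mc{O}/\mathfrak{p}^{v+1})$ and that any quotient of \emph{that single local factor} detecting it must be large. But $Q$ is a quotient of $\prod_{\mathfrak{q}\mid\mathfrak{b}}G(\mc{O}/\mathfrak{q}^{e_\mathfrak{q}})$ (or rather of the image of $\Gamma$ therein), and a normal subgroup of such a product is \emph{not} in general a product of normal subgroups of the factors, precisely because the factors have nontrivial center; so one cannot simply pass from $Q$ to a quotient of one local factor. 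The paper resolves this by (i) passing to the simply-connected cover so that CSP and strong approximation apply cleanly, (ii) replacing $\DM_\Gamma$ by $\DM_\Gamma^N$ with $N=\tx{ord}(Z(G))$ via Selberg's lemma, which permits enlarging the kernel by the center of the good-prime part $B$ without losing detection of $\gamma^N$, and (iii) invoking a result of Larsen--Lubotzky that normal subgroups of a product whose factors have centerless quotients \emph{do} split as products. Only then can one project to a single prime and apply the filtration classification of normal subgroups of $G(\mc{O}/\mathfrak{p}^k)$. The finitely many bad primes, where that classification may fail, are handled by a separate and weaker subgroup-growth estimate which still suffices thanks to the word-length scaling. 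Your final paragraph gestures at these issues, but they are the technical heart of the lower-bound argument, not constants to be absorbed.
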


\noindent
It is interesting to ask whether an analogous result holds in rank 1. So far, the normal Farb growth of a nonabelian free group has been bounded below by $n^{2/3}$ (see \cite{KM10}).

\begin{thm} \label{MainUpperBoundTheorem}
Let $G$ be a Chevalley group, $K$ be a number field, and $f \in \mc{O}_K \sm \{ 0 \}$. If $\Gamma$ is a finitely generated subgroup of $G(\C)$ with the property that $\Gamma \cap G(\mc{O}_{K,f})$ is of finite-index in $\Gamma$, then its normal Farb growth is bounded above by $n^{\dim(G)}$.
\end{thm}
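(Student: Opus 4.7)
The strategy is to show that any nontrivial $\gamma \in \Gamma$ of word length at most $n$ is separated from the identity by the reduction modulo some prime ideal $\mf{p}$ of $\mc{O}_K$ of norm $O(n)$; the resulting congruence image has size at most $|G(\F_\mf{p})| \leq C \cdot N(\mf{p})^{\dim G} \lesssim n^{\dim G}$.

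Using the commensurability invariance of normal Farb growth established in the Preliminaries, I reduce to the finite-index subgroup $\Gamma_0 := \Gamma \cap G(\mc{O}_{K,f}) \su G(\mc{O}_{K,f})$. Fix a closed embedding $G \hookrightarrow \GL_N$ over $\Z$ and a finite generating set $X_0$ for $\Gamma_0$. All constants below depend on $G, N, K, f$, and $X_0$ but not on $n$ or $\gamma$. For $\gamma \in \Gamma_0$ with $\|\gamma\|_{X_0} \leq n$, submultiplicativity of matrix norms shows that every entry of $\gamma$ can be written as $b/f^{cn}$ for some $b \in \mc{O}_K$ with $|b|_v \leq C^n$ at every archimedean place $v$ of $K$, and hence $|N_{K/\Q}(b)| \leq C_1^n$. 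Since $\gamma \neq 1$, at least one entry of $\gamma - I$ is nonzero, singling out a distinguished $b \in \mc{O}_K \sm \{0\}$ with this norm bound.

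The crucial arithmetic step is the Chebyshev-type estimate $\sum_{N(\mf{p}) \leq x} \log N(\mf{p}) \geq c_0 x$ for $x$ sufficiently large, valid in any number field. The contribution from primes dividing $fb$ is at most $\log |N_{K/\Q}(fb)| \leq C_2 n$. Thus choosing $x = C_3 n$ with $C_3$ sufficiently large produces a prime $\mf{p}$ of $\mc{O}_K$ with $N(\mf{p}) \leq C_3 n$, $\mf{p} \nmid f$, and $\mf{p} \nmid b$. Since $\mf{p} \nmid f$, the reduction map $\pi_{\mf{p}} \co G(\mc{O}_{K,f}) \rw G(\F_{\mf{p}})$ is a well-defined homomorphism; since $\mf{p} \nmid b$, the image $\pi_{\mf{p}}(\gamma)$ is nontrivial. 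Restricting to $\Gamma_0$ and using the universal bound $|G(\F_{\mf{p}})| \leq C_4 N(\mf{p})^{\dim G}$ produces a finite quotient of $\Gamma_0$ of size at most $C_5 n^{\dim G}$ in which $\gamma$ survives.

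\textbf{Main obstacle.} The most delicate point is the choice of Chebyshev's estimate rather than the prime number theorem. A naive count of primes of norm at most $x$ outside a ``bad'' set of size $O(n)$ via $\pi_K(x) \sim x/\log x$ would force $x \sim n\log n$ and thereby introduce a spurious $(\log n)^{\dim G}$ factor that would destroy the tight bound $n^{\dim G}$. Using the weighted estimate on $\vartheta_K(x) = \sum_{N(\mf{p}) \leq x} \log N(\mf{p})$ is exactly sharp enough to pin down $x = O(n)$. The remaining ingredients (the commensurability reduction to $\Gamma_0$, the elementary submultiplicative entry bound, and the cardinality estimate $|G(\F_q)| \leq q^{\dim G}$) are standard.
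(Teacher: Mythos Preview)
Your reduction to $\Gamma_0 := \Gamma \cap G(\mc{O}_{K,f})$ is a genuine gap. The Preliminaries do \emph{not} establish commensurability invariance of normal Farb growth; Lemma~\ref{DivisibilityAsymptoticLemma} only gives $\DM_{\Gamma_0} \preceq \DM_\Gamma$ (since $\Gamma_0 \subset \Gamma$), which is the wrong direction for an upper bound on $\DM_\Gamma$. Example~\ref{ex:example} shows explicitly that $\DM_\Gamma$ can be strictly larger than $\DM_{\Gamma_0}$ for a finite-index pair, and the general inequality $\DM_\Gamma(n) \preceq (\DM_{\Gamma_0}(n))^{[\Gamma:\Gamma_0]}$ would only yield $n^{[\Gamma:\Gamma_0]\dim G}$, not $n^{\dim G}$. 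Concretely, the congruence quotient $\Gamma_0 \to G(\F_{\mf{p}})$ you construct is a quotient of $\Gamma_0$, not of $\Gamma$: the kernel need not be normal in $\Gamma$, and elements of $\Gamma \sm \Gamma_0$ may have transcendental entries over $K$, so the reduction map $\pi_{\mf{p}}$ does not even make sense on them.

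The paper closes this gap with Proposition~\ref{pro:killtrans}: since $\Gamma$ is finitely generated inside $G(\C)$, it lies in $G(F)$ for some field $F$ finitely generated over $K$; choosing a transcendence basis and specializing the transcendentals to suitable algebraic values yields a homomorphism $\phi \co \Gamma \to G(\mc{O}_{L,f'})$ (for some number field $L$ and integer $f'$) which restricts to the identity on $G(\mc{O}_{K,f})$ and therefore has finite kernel. Now the second clause of Lemma~\ref{DivisibilityAsymptoticLemma} applies in the correct direction, giving $\DM_\Gamma \preceq \DM_{\phi(\Gamma)}$, and your entry-bound/Chebyshev/reduction argument (which is essentially Proposition~\ref{thecaseofO_L[1/f]}, with your $\vartheta_K$ estimate playing the role of the paper's Cebotar\"ev/density argument in Proposition~\ref{OLCase}) then finishes the proof. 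In short: the arithmetic core of your argument is correct and matches the paper's, but the missing specialization step is not a formality --- it is exactly what lets the congruence quotients be quotients of $\Gamma$ itself.
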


As a corollary of Theorems \ref{MainLowerBoundTheorem} and \ref{MainUpperBoundTheorem} we have the following result.

\begin{cor}
Let $G$ be a Chevalley group of rank at least $2$. Then the normal Farb growth of every $S$-arithmetic subgroup of $G$ is precisely $n^{\dim(G)}$.
\end{cor}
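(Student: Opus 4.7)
The plan is to deduce the corollary essentially tautologically from Theorems~\ref{MainLowerBoundTheorem} and \ref{MainUpperBoundTheorem}, after checking that the hypotheses of both theorems apply simultaneously to any $S$-arithmetic subgroup. Recall from the introduction that an $S$-arithmetic subgroup of $G$ is, by definition, a subgroup $\Gamma \leq G(\C)$ commensurable with $G(\mc{O}_{K,f})$ for some number field $K$ and some $f \in \mc{O}_K \sm \{0\}$. Commensurability means precisely that $\Gamma \cap G(\mc{O}_{K,f})$ has finite index both in $\Gamma$ and in $G(\mc{O}_{K,f})$, which are exactly the two hypotheses appearing in Theorems~\ref{MainLowerBoundTheorem} and \ref{MainUpperBoundTheorem} respectively.

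First I would verify that $\Gamma$ is finitely generated, so that its normal Farb growth function is defined and behaves well with respect to change of generating set (as discussed in Section~\ref{PreliminariesSection}). For $G$ of rank at least $2$, the group $G(\mc{O}_{K,f})$ is known to be finitely generated (indeed finitely presented) by the classical work of Borel--Serre and Raghunathan, and finite generation passes to any group commensurable with it.

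With finite generation in hand, I would apply Theorem~\ref{MainLowerBoundTheorem} to $\Gamma$, whose hypothesis that $\Gamma \cap G(\mc{O}_{K,f})$ is of finite index in $G(\mc{O}_{K,f})$ is one half of commensurability; this gives the lower bound $\DM_{\Gamma,X}(n) \succeq n^{\dim(G)}$. Then I would apply Theorem~\ref{MainUpperBoundTheorem} to the same $\Gamma$, whose hypothesis that $\Gamma \cap G(\mc{O}_{K,f})$ is of finite index in $\Gamma$ is the other half of commensurability; this gives the matching upper bound $\DM_{\Gamma,X}(n) \preceq n^{\dim(G)}$. Combining the two yields the precise asymptotic $\DM_{\Gamma,X}(n) \asymp n^{\dim(G)}$.

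There is no real obstacle here: the proof is a bookkeeping argument that unwraps the definition of $S$-arithmeticity into the two one-sided finite-index conditions already packaged into the statements of the main theorems. The only minor point requiring care is the appeal to finite generation of $S$-arithmetic subgroups in the higher-rank setting, which is a standard input.
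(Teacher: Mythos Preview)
Your proposal is correct and matches the paper's approach: the paper simply states the corollary as an immediate consequence of Theorems~\ref{MainLowerBoundTheorem} and~\ref{MainUpperBoundTheorem}, and your argument is precisely the unpacking of that implication via the definition of commensurability. The only addition you make beyond the paper is the explicit remark on finite generation of $S$-arithmetic groups, which is a reasonable point of care but standard.
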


\noindent
This result is surprising since in general, if $\Delta$ has finite-index in $\Gamma$, we cannot hope for $F_\Gamma \approx F_\Delta$ (see Example \ref{ex:example} at the end of Section \ref{PreliminariesSection}).
Instead, the most general result in this direction is $F_\Gamma(n) \preceq (F_\Delta(n))^{[\Gamma:\Delta]}$ (see  \cite[Lemma 1.3]{B10}).

\paragraph{Acknowledgements.}
It is our pleasure to thank Daniel Mostow for suggesting this pursuit.
We are grateful to Martin Kassabov, Alexander Lubotzky, and Gopal Prasad for helpful mathematical conversations, and wish to thank Alexander Premet for providing us with a useful reference.
Furtherú, we are thankful to Benson Farb for giving us comments on an earlier draft.

\section{Preliminaries} \label{PreliminariesSection}

Let $\Gamma$ be a finitely generated, residually finite group. For $\gamma \in \Gamma \sm \{1\}$ we define
$Q(\gamma,\Gamma)$ to be the set of finite quotients of $\Gamma$ in which the image of $\gamma$ is non-trivial. We say that these quotients detect $\gamma$. Since $\Gamma$ is residually finite, this set is non-empty, and thus the natural number
\[ \D_\Gamma(\gamma) := \min\{ |Q| : Q \in Q(\gamma,G)\} \]
is defined and positive for each $\gamma \in \Gamma \sm \{1\}$. For a fixed finite generating set $X \subset \Gamma$ we define
\[ \DM_{\Gamma,X}(n) := \max\{ \D_\Gamma(\gamma) \;:\;\gamma \in \Gamma, \| \gamma \|_X \leq n, \gamma \neq 1 \}.\]
For two functions $f,g : \N \rw \N$ we write $f \preceq g$ if there exists a natural number $M$ such that $f(n) \leq Mg(Mn)$, and we write $f \approx g$ if $f \preceq g$ and $g \preceq f$. We will also write $f \succeq g$ for $g \preceq f$ and in the case when $f \approx g$ does not hold we write $f \not \approx g$.

It was shown in \cite{B10} that if $X,Y$ are two finite generating sets for the residually finite group $\Gamma$, then $\DM_{\Gamma,X} \approx \DM_{\Gamma,Y}$. Since we will only be interested in asymptotic behavior, we let $\DM_\Gamma$ be the equivalence class (with respect to $\approx$) of the functions $\DM_{\Gamma,X}$ for all possible finite generating sets $X$ of $\Gamma$. Sometimes, by abuse of notation, $\DM_\Gamma$ will stand for some particular representative of this equivalence class, constructed with respect to a convenient generating set.

We will need to use the following auxiliary function in our proofs. For any natural number $k$, we define
\[\D_\Gamma^k (\gamma) := D_\Gamma(\gamma^k) \;\;\text{and}\;\;
\DM^k_{\Gamma,X}(n) := \max\{  \D_\Gamma^k (\gamma) \;:\;\gamma \in \Gamma, \| \gamma \|_X \leq n, \gamma^k \neq 1 \}.\]
\noindent
The next lemma, which is a consequence of Selberg's Lemma (see \cite{alperin}), reveals the potential utility of $\DM_{\Gamma, X}^k$.
\begin{lem} \label{SelbergLemma}
Let $\Gamma$ be an infinite linear group generated by a finite set $X$ and let $k$ a natural number.
Then $\DM_{\Gamma,X} \approx \DM_{\Gamma,X}^k$.
\end{lem}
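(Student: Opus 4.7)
The plan is to prove the two asymptotic inequalities $\DM_{\Gamma,X}^k \preceq \DM_{\Gamma,X}$ and $\DM_{\Gamma,X} \preceq \DM_{\Gamma,X}^k$ separately. The first is immediate from $\|\gamma^k\|_X \le k\|\gamma\|_X$: if $\|\gamma\|_X \le n$ and $\gamma^k \neq 1$, then $\D_\Gamma(\gamma^k) \le \DM_{\Gamma,X}(kn)$, giving $\DM_{\Gamma,X}^k(n) \le \DM_{\Gamma,X}(kn)$. All the content of the lemma is in the reverse direction, where one wants to control arbitrary nontrivial $\gamma$—including torsion elements for which $\gamma^k$ may equal $1$—using only $\DM_{\Gamma,X}^k$.

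For the reverse inequality, I would invoke Selberg's Lemma to produce a torsion-free finite-index subgroup of $\Gamma$, then pass to its normal core (still torsion-free as a subgroup, and now normal of finite index) to obtain a torsion-free normal subgroup $\Gamma_0 \trianglelefteq \Gamma$ with $N := [\Gamma:\Gamma_0] < \infty$. Now fix any nontrivial $\gamma \in \Gamma$ with $\|\gamma\|_X \le n$ and split into two cases. If $\gamma$ has finite order, then $\gamma \notin \Gamma_0$ because $\Gamma_0$ is torsion-free, so the image of $\gamma$ in the finite quotient $\Gamma/\Gamma_0$ of order $N$ is nontrivial; hence $\D_\Gamma(\gamma) \le N$. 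If $\gamma$ has infinite order, then $\gamma^k \neq 1$, and any finite quotient detecting $\gamma^k$ automatically detects $\gamma$, so $\D_\Gamma(\gamma) \le \D_\Gamma(\gamma^k) \le \DM_{\Gamma,X}^k(n)$. Taking the maximum over all such $\gamma$ yields
$$\DM_{\Gamma,X}(n) \;\le\; \max\bigl\{N,\; \DM_{\Gamma,X}^k(n)\bigr\}.$$

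To finish, I would check that $\DM_{\Gamma,X}^k$ is unbounded, for then the right-hand maximum reduces to $\DM_{\Gamma,X}^k(n)$ for all sufficiently large $n$, completing $\DM_{\Gamma,X} \preceq \DM_{\Gamma,X}^k$. Since $\Gamma$ is infinite, finitely generated, and residually finite, a standard argument shows $\DM_{\Gamma,X}$ is unbounded (otherwise $\Gamma$ would embed into the finite product of its finitely many quotients of order below a fixed bound, forcing $\Gamma$ to be finite). Combined with the torsion estimate $\D_\Gamma(\gamma) \le N$, this forces the supremum in the definition of $\DM_{\Gamma,X}(n)$ to be realized by infinite-order elements for all large $n$, and these are precisely the elements captured by $\DM_{\Gamma,X}^k$. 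The one conceptual obstacle is handling the torsion case without access to $\gamma^k$, but Selberg's Lemma dispatches it in a single stroke by supplying the uniform constant $N$.
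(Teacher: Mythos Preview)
Your proof is correct and follows essentially the same approach as the paper: both use Selberg's Lemma to produce a torsion-free normal finite-index subgroup, observe that torsion elements are detected by the resulting finite quotient while infinite-order elements satisfy $\D_\Gamma(\gamma)\le\D_\Gamma(\gamma^k)$, and then invoke unboundedness of $\DM_{\Gamma,X}$ (from infinitude of $\Gamma$) to conclude. The only cosmetic difference is that the paper argues by contradiction on a chosen maximizer $\gamma_n$, whereas you establish the explicit bound $\DM_{\Gamma,X}(n)\le\max\{N,\DM_{\Gamma,X}^k(n)\}$ first.
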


\begin{proof}
The inequality $\DM_{\Gamma,X}^k(n) \leq \DM_{\Gamma,X}(kn)$ is straightforward.
It suffices to prove $\DM_{\Gamma,X}(n) \leq \DM_{\Gamma,X}^k(n)$ for all but finitely many $n$.
Let $\gamma_n$ be an element such that $D_\Gamma(\gamma_n) = \DM_{\Gamma,X}(n)$ and $\|\gamma_n\|_X \leq n$.
If $\gamma_n^k \neq 1$, then $D_\Gamma(\gamma_n) \leq D_\Gamma^k( \gamma_n)$, giving $\DM_{\Gamma,X}(n) \leq \DM_{\Gamma,X}^k(n)$. The proof will be complete if we show that $\gamma_n^k=1$ holds for only finitely many $n$.
Suppose otherwise, then by Selberg's Lemma, there exists a finite-index normal subgroup $\Delta$ of $\Gamma$ that is torsion-free, and in particular $\gamma_n \notin \Delta$ for infinitely many $n$. Since $\DM_{\Gamma,X}(n)$ is non-decreasing in $n$, it must be bounded by $[\Gamma:\Delta]$, but this contradicts the infinitude of $\Gamma$.
\end{proof}

\begin{cor} If $\Gamma$ is an infinite linear group and $X,Y$ are finite generating sets for $\Gamma$, then $\DM^k_{\Gamma,X} \approx \DM^k_{\Gamma,Y}$. \end{cor}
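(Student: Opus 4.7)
The plan is to chain three equivalences using the transitivity of $\approx$. First, I would apply Lemma \ref{SelbergLemma} with the generating set $X$ to obtain $\DM_{\Gamma,X}^k \approx \DM_{\Gamma,X}$. Next, I would invoke the result from \cite{B10} quoted in the second paragraph of Section \ref{PreliminariesSection}, which gives $\DM_{\Gamma,X} \approx \DM_{\Gamma,Y}$ since $X$ and $Y$ both generate $\Gamma$. Finally, applying Lemma \ref{SelbergLemma} again, this time with the generating set $Y$, yields $\DM_{\Gamma,Y} \approx \DM_{\Gamma,Y}^k$. Concatenating these three equivalences via transitivity of $\approx$ produces the desired relation $\DM_{\Gamma,X}^k \approx \DM_{\Gamma,Y}^k$.

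There is no substantive obstacle here: the corollary is essentially a bookkeeping consequence of Lemma \ref{SelbergLemma} together with the already-established fact that the asymptotic class of $\DM_{\Gamma,X}$ is independent of the generating set $X$. The only mild point worth noting is that Lemma \ref{SelbergLemma} requires $\Gamma$ to be an infinite linear group, which is precisely the hypothesis carried over in the corollary's statement, so both applications of the lemma are valid.
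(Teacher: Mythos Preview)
Your proposal is correct and is precisely the intended argument: the paper states the corollary without proof immediately after Lemma~\ref{SelbergLemma}, so it is meant to follow exactly by chaining $\DM^k_{\Gamma,X} \approx \DM_{\Gamma,X} \approx \DM_{\Gamma,Y} \approx \DM^k_{\Gamma,Y}$ as you describe.
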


As with the function $\DM$, we will denote the asymptotic equivalence class of $\DM^k_{\Gamma,X}$ as $X$ varies by $\DM^k_\Gamma$.
The following example shows that the linearity assumption cannot be dropped from Lemma \ref{SelbergLemma}.

\begin{exa} \label{ex:example1}
Let $\Gamma$ be the Lamplighter group $\Z/2\Z \wr \Z$. Set $\Delta = \op_{i \in \Z} \Z/2\Z$ to be the base group of $\Gamma$ so $\Gamma/\Delta \cong \Z$.
It is easy to see that for any generating set $X$ of $\Gamma$, we have
$\DM_{\Gamma,X}^2(n) \approx \DM_\Z(n)$. Thus $\DM_{\Gamma,X}^2(n) \approx \log(n)$ by \cite[Corollary 2.3]{B10}.
We now prove that $F_\Gamma(n) \succeq (\log(n))^2$, so in particular $\DM_\Gamma \not \approx \DM_{\Gamma,X}^2$.
\end{exa}

\begin{proof}
Let $\delta_i \in \Delta$ be the element given by the $i$th Kronecker delta function.
For $k$ a natural number greater than $4$, set $\gamma_k := \delta_1 + \delta_{\lcm(1, \ldots, k)}$.
Let $\phi : \Gamma \to P$ be a homomorphism to a finite quotient of $\Gamma$ that realizes $\D_\Gamma(\gamma_k)$.
We first claim that if $\delta_1 + \delta_{1+n} \in \ker \phi$ for $n \in \N$, then $n \geq k$.
Indeed, a simple calculation shows that $\delta_1 + \delta_{1+mn} \in \ker \phi$ for any $m \in \N$.
If $n \leq k$, we have that $\lcm(1, \ldots, k)$ is a multiple of $n$, so
$\delta_1 + \delta_{\lcm(1, \ldots, k)} \in \ker \phi,$
which is impossible.

Next, we claim that the set $S := \{ (\delta_n, t) : n, t \in \{ 1, \ldots, \lfloor k/4 \rfloor \} \} \subseteq \Gamma$ injects into $P$ through $\phi$.
Suppose not, then $(\delta_n, t) (\delta_{n'}, t')^{-1} \in \ker \phi$
for $t,t', n,n' \in \{ 1, \ldots ,  \lfloor k/4 \rfloor \}$ with $(\delta_n, t) \neq (\delta_{n'}, t')$.
Set $\alpha = (\delta_n, t) (\delta_{n'}, t')^{-1} = (\delta_n + \delta_{n'+t-t'}, t - t')$.
If $t - t' = 0$, then by our first claim $n = n'$ or $| |n| - |n'| | \geq k$.
If $n = n'$, then $\alpha = (0,0)$, while the latter possibility contradicts $| |n| - |n'| | \leq k/2$.
If $t - t' \neq 0$, because $\alpha \delta_i \alpha^{-1} \delta_i^{-1} \in \ker \phi$ for all $i$, we have
$\delta_{1+t-t'} + \delta_1 \in \ker \phi,$ where by our first claim, $|t - t'| \geq k$, however $|t - t'| \leq k/2$.
Our second claim is now shown.

Since $S$ injects into $P$, we have $|P| \geq \lfloor k/4 \rfloor^2$.
Fix a finite generating set $X$ for $\Gamma$, by the prime number theorem, there exists a natural number $M$ such that $\|\gamma_k \|_X \leq M 3^k$.
Set $k = \lfloor \log_3(n) \rfloor$, then because $F_\Gamma$ is increasing we have, for sufficiently large $n$,
\[
F_\Gamma(Mn) \geq F_\Gamma(M 3^k) \geq F_\Gamma(\| \gamma_k \|_X) \geq \lfloor k/4 \rfloor^2
\geq \frac{1}{32} \left[ \frac{\log(n)}{\log(3)} \right]^2.
\]
\end{proof}

\begin{lem}\label{DivisibilityAsymptoticLemma}
Let $\Gamma,\Delta$ be finitely generated and residually finite. Then \begin{itemize} \item If $\Delta \subset \Gamma$, then $\DM_\Delta \preceq \DM_\Gamma$.
\item If $f:\Delta \rw \Gamma$ is surjective with finite kernel, then $\DM_{\Delta} \preceq \DM_\Gamma$. If moreover $\tx{ker}(f)$ is central in $\Delta$ and $\Gamma$ is linear, then $\DM_\Delta \approx \DM_\Gamma$.
\end{itemize}
\end{lem}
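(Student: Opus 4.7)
The plan is to handle the three implications in increasing order of difficulty. The first bullet and the ``$\preceq$'' direction of the second bullet will follow formally from the definitions combined with the residual finiteness of $\Delta$, requiring only compatible choices of generating sets and one application of residual finiteness to handle the finite kernel. The ``$\approx$'' refinement in the second bullet is where the real work lies and where the hypotheses of centrality and linearity become essential: the plan there is to push a detecting quotient of $\Delta$ forward through $f$ to a detecting quotient of $\Gamma$, after enlarging by $\ker(f)$, and then to invoke Lemma \ref{SelbergLemma} to upgrade a bound for $k$-th powers into a bound for arbitrary elements.

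For the first bullet I would fix finite generating sets $X_\Delta$ and $X_\Gamma$ of $\Delta$ and $\Gamma$; each element of $X_\Delta$ has finite word length in $X_\Gamma$, giving a distortion bound $\|\delta\|_{X_\Gamma}\leq M\|\delta\|_{X_\Delta}$ for some constant $M$. Restricting a finite quotient of $\Gamma$ that detects $\delta$ to the image of $\Delta$ yields a finite quotient of $\Delta$ of no greater order still detecting $\delta$, so $\D_\Delta(\delta)\leq\D_\Gamma(\delta)$, and combining the two observations gives $\DM_{\Delta,X_\Delta}(n)\leq\DM_{\Gamma,X_\Gamma}(Mn)$. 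For the ``$\preceq$'' half of the second bullet I would choose $X_\Delta$ so that $X_\Gamma:=f(X_\Delta)$ generates $\Gamma$, whence $\|f(\delta)\|_{X_\Gamma}\leq\|\delta\|_{X_\Delta}$. For $\delta\in\Delta\setminus\{1\}$ of small word length either $f(\delta)\neq 1$, in which case pulling back a detecting quotient of $\Gamma$ bounds $\D_\Delta(\delta)\leq\D_\Gamma(f(\delta))$, or $\delta$ lies in the finite set $\ker(f)\setminus\{1\}$, in which case residual finiteness of $\Delta$ supplies a single finite quotient of $\Delta$ that is injective on $\ker(f)$ and gives a universal constant bound.

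The hard part is showing $\DM_\Gamma\preceq\DM_\Delta$ under the additional hypotheses. Set $K:=\ker(f)$ and $k:=|K|$, and aim to establish $\DM_\Gamma^k\preceq\DM_\Delta$. Given $\gamma\in\Gamma$ with $\gamma^k\neq 1$ and $\|\gamma\|_{X_\Gamma}\leq n$, lift $\gamma$ to $\delta\in\Delta$ of equal word length, so $\delta^k\neq 1$. Pick a normal subgroup $N\triangleleft\Delta$ realizing $\D_\Delta(\delta^k)$. Centrality of $K$ ensures that $NK$ is normal in $\Delta$ and that $\Gamma/f(N)\cong\Delta/NK$; the crucial claim is that $\delta\notin NK$, because any factorization $\delta=nh$ with $n\in N$ and $h\in K$ would, using centrality and the Lagrange identity $h^k=1$, force $\delta^k=n^kh^k=n^k\in N$, contradicting the choice of $N$. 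Hence $\gamma$ is nontrivial in $\Gamma/f(N)$, a group of order at most $|\Delta/N|$, so $\D_\Gamma(\gamma)\leq\D_\Delta^k(\delta)$ and $\DM_\Gamma^k\preceq\DM_\Delta^k\preceq\DM_\Delta$. Lemma \ref{SelbergLemma}, applied to the (infinite) linear group $\Gamma$, then gives $\DM_\Gamma\approx\DM_\Gamma^k$, completing the argument. The main obstacle is precisely this descent step: the identity $(nh)^k=n^kh^k$ requires centrality, without which a detecting normal subgroup of $\Delta$ has no reason to descend to one of $\Gamma$.
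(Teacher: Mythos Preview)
Your argument is essentially the paper's own: the same choice of $k=|\ker(f)|$, the same $NK$ construction, the same centrality computation $(nh)^k=n^kh^k=n^k$, and the same appeal to Selberg via Lemma~\ref{SelbergLemma}. One bookkeeping slip to fix: from $\D_\Gamma(\gamma)\leq\D_\Delta^k(\delta)$ (valid for $\gamma$ with $\gamma^k\neq 1$) you cannot deduce $\DM_\Gamma^k\preceq\DM_\Delta^k$, because $\DM_\Gamma^k(n)$ maximizes $\D_\Gamma(\gamma^k)$, not $\D_\Gamma(\gamma)$, and in general $\D_\Gamma(\gamma^k)\geq\D_\Gamma(\gamma)$. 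The correct assembly---and this is precisely what the paper does---is to pick $\gamma_n$ realizing $\DM_\Gamma(n)$, use the Selberg argument from the proof of Lemma~\ref{SelbergLemma} (applied to the linear group $\Gamma$) to see that $\gamma_n^k\neq 1$ for all but finitely many $n$, lift to $g_n$ of the same word length, and then apply your inequality to obtain $\DM_\Gamma(n)=\D_\Gamma(\gamma_n)\leq\D_\Delta^k(g_n)\leq\DM_\Delta^k(n)\leq\DM_\Delta(kn)$ directly, without passing through $\DM_\Gamma^k$.
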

\begin{proof}
The first assertion is \cite[Lemma 1.1]{B10}. Consider the second assertion. The inequality $\DM_{\Delta} \preceq \DM_\Gamma$ is straightforward. Assuming now that $\tx{ker}(f)$ is central in $\Delta$, we will show $\DM^k_\Delta \succeq \DM_\Gamma$, where $k=|\tx{ker}(f)|$. To that end, fix a finite generating set $X$ for $\Delta$ and use its image for $\Gamma$. Construct $\DM_\Delta$ and $\DM_\Gamma$ with respect to these generating sets. Let $g \in \Delta$, $g^k\neq 1$. Since $g^k=(zg)^k$ for all $z \in \tx{ker}(f)$, we see that $\tx{ker}(f)N$ is a normal subgroup of $\Delta$ not containing $g$. Thus $\D^k_\Delta(g) \geq \D_\Gamma(f(g))$ for all $g \in \Delta$ with $g^k \neq 1$. We now need to handle torsion elements in $\Gamma$.

For each natural number $n$, let $\gamma_n \in \Gamma$ be an element satisfying $\D_\Gamma(\gamma_n) = \DM_\Gamma(n)$ and $\| \gamma_n \| \leq n$.
Since $f$ is surjective and by our choice of generating sets, there exists $g_n \in \Delta$ such that $f(g_n) = \gamma_n$ and $\| g_n \| \leq n$.
Then if $g_n^k = 1$ for infinitely many $n$, then $\gamma_n^k =1 $ for infinitely many $n$. Following the Selberg Lemma application from Lemma \ref{SelbergLemma}, we see that $\Gamma$ is finite, which is impossible.
Thus, $g_n^k \neq 1$ for all but finitely many $n$.
For such $n$, we have $\D_\Delta^k (g_n) \geq \D_\Gamma (f(g_n))$ and hence $\DM_\Delta^k(n) \succeq \DM_\Gamma(n)$.
\end{proof}

We finish the preliminaries section with an example that illustrates that normal Farb growth of a group may be different from that of a finite index subgroup.

\begin{exa} \label{ex:example}
Let $Q$ be the subgroup of $\GL_2(\Z)$ generated by
\[
A=\begin{pmatrix} 1 & 0 \\ 0 & -1 \end{pmatrix} \qquad
\text{ and } \qquad
B = \begin{pmatrix} 0 & 1 \\ 1 & 0 \end{pmatrix}.
\]
Let $\Delta = \Z \times \Z$ and set $\Gamma = \Delta \rtimes Q$, where $Q$ acts on $\Delta$ via the standard action of $\GL_2(\Z)$.
Because $Q$ is finite, $\Gamma$ contains $\Delta$ as a subgroup of finite-index.
Further, $F_{\Delta}(n) \approx \log(n)$ by \cite[Corollary 2.3]{B10}.
We now prove that $F_\Gamma(n) \succeq (\log(n))^2.$
\end{exa}

\begin{proof}
Let $X$ be a generating set for $\Gamma$ containing $(1,0)$ and $(0,1)$ in $\Delta$.
Set $\gamma_k$ to be $(\lcm(1, \ldots, k), 0) \in \Delta$.
By the prime number theorem, there exists a natural number $M$ such that $\|\gamma_k\|_X \leq M 3^k$.
Let $\phi: \Gamma \to P$ be a homomorphism to a finite quotient of $\Gamma$ that realizes $D_\Gamma(\gamma_k)$ and set $V = \ker \phi \cap \Delta$.
We first construct a subgroup of $V$ of the form $d \Z \times d \Z$ for some natural number $d$.
Consider the intersection of $V$ with $\Z \times 0$.
This is a subgroup of $\Z$, hence is isomorphic to $d \Z$ for some natural number $d$.
Thus we have $d \Z \times 0 \subset V$, and conjugating by $B$ we also find $0 \times d \Z$ is in $V$.

Next, we claim that the index of $d \Z\times d\Z$ in $V$ is at most 4: Let $(a,b) \in V$.
Then $(2a,0)=(a,b)+A(a,b)A^{-1} \in V$, and similarly $(2b,0) \in V$, so $2a,2b \in d\Z$, and hence $2(a,b) \in d\Z \times d\Z$, which shows that every element of $V/d\Z \times d\Z$ has order (at most) 2. But $V$ is a free abelian group of rank 2, so $V/d\Z \times d\Z$ is generated by two elements, and the claim follows.
We conclude that $d^2 = [\Delta: d \Z \times d \Z] = [\Delta: V][V: d \Z \times d \Z] \leq 4 [\Delta:V]$, giving $|P| \geq \frac14 d^2$.

Finally, since $\gamma_k \notin \ker(\phi)$, we must have that $d \geq k$.
Hence, $F_\Gamma(M 3^k) \geq D_\Gamma(\gamma_k) \geq \frac14 k^2$.
Set $k = \lfloor (\log_3(n)) \rfloor$, then because $F_\Gamma$ is increasing we have, for sufficiently large $n$,
\[
F_\Gamma(M n) \geq F_\Gamma( M 3^k ) \geq \frac{1}4 k^2 \geq \frac{1}{16} \left( \frac{\log(n)}{\log(3)} \right)^2,
\]
giving $F_\Gamma(n) \succeq (\log(n))^2,$ as desired.
\end{proof}


\section{Lower bounds} \label{LowerBounds}

Let $G$ be a Chevalley group, i.e. a split simple group scheme defined over $\Z$, and let $\mf{g}$ be its Lie-algebra. Note that we do not assume that $G$ is simply-connected. For a natural number $m$, we put $G(m) = G(\Z/m\Z)$. For a while, we will focus attention on the powers of a single prime $p$, and to lighten the notation we put $G_k = G(\Z/p^k\Z)$.

Recall from \cite[exp.1, 2.3.3+2.3.6]{SGA3} the definition of the center $Z(G)$ of $G$. It is the subfunctor of $G$, which assigns to each scheme $S$ the following subgroup of $G(S)$
\[ Z(G)(S) := \left\{ g \in G(S)|\ \forall S' \rw S:\tx{Ad}(g)|_{G(S')} = \tx{id}_{G(S')}  \right\} \]
where $\tx{Ad}(g)|_{G(S')}$ denotes the automorphism of $G(S')$ provided by conjugation by the image of $g$ under the natural map $G(S) \rw G(S')$.
\pagebreak[1]

It is shown in \cite[exp.22, 4.1.8]{SGA3} that the functor $Z(G)$ is representable by a closed $\Z$-subgroup-scheme of $G$, which is finite and diagonalizable. As such, $Z(G)$ is a product of finitely many groups schemes, each isomorphic to $\mu_n$ for some $n$, where $\mu_n$ is the group scheme of $n$-th roots of unity. In particular, $Z(G)$ is etale over $\Z[\tx{ord}(Z(G))^{-1}]$. See \cite[exp.8, 2.1]{SGA3}.

From the definition it is obvious that $Z(G)(S) \subset Z(G(S))$. We will show that there exists $f \in \Z \sm \{0\}$ such that if $S$ lies over $\tx{Spec}(\Z_f)$, then $Z(G)(S)=Z(G(S))$. The main ingredient in this proof is the following lemma, which asserts the existence of a strongly regular section of the split maximal torus in $G$ over $\tx{Spec}(\Z_f)$.

\begin{lem} Let $T \subset G$ be a split maximal torus. There exists $f \in \Z\sm\{0\}$ and a point $s \in T(\Z_f)$ such that
\[ \tx{Cent}(s,G \times \tx{Spec}(\Z_f)) = T \times \tx{Spec}(\Z_f). \]
\end{lem}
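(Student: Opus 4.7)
The plan is to exhibit a $\Q$-point of the strongly regular locus of $T$, clear denominators to obtain a section over $\Z_f$ for suitable $f$, and then verify the scheme-theoretic equality $\tx{Cent}(s,G\times\sp\Z_f)=T\times\sp\Z_f$ by a tangent-space calculation combined with a fiberwise analysis.

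First, consider the open subscheme $T^{sr}\subseteq T$ cut out by the simultaneous non-vanishing of $\alpha-1$ for every root $\alpha\in\Phi$ of $G$ and by $w\cdot t\neq t$ for every non-identity $w$ in the Weyl group $W=N_G(T)/T$. Since $T$ is split, $T\cong\mb{G}_m^{r}$ over $\Z$ (where $r=\tx{rk}(G)$), and $T^{sr}$ is the complement of finitely many proper closed subschemes, hence is non-empty and open. A point $s_\Q\in T^{sr}(\Q)$ therefore exists. Taking $f\in\Z\sm\{0\}$ to be the product of the primes appearing in the denominators of the coordinates of $s_\Q$ and in the finitely many nonzero rational numbers $\alpha(s_\Q)-1$ (together with the analogous distinguishing quantities for the Weyl conditions), $s_\Q$ extends to a point $s\in T^{sr}(\Z_f)$.

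Next, set $\psi\co G\times\sp\Z_f\to G\times\sp\Z_f$, $g\mapsto g s g^{-1} s^{-1}$, so that $\tx{Cent}(s,G\times\sp\Z_f)=\psi^{-1}(1)$, with the inclusion $T\subseteq\psi^{-1}(1)$ clear. The identity $\psi(tg)=t\psi(g)t^{-1}$ for $t\in T$ (valid because $s$ commutes with $T$) implies that the differential of $\psi$ at $t$, after identifying tangent spaces via left translation, is the $\Z_f$-linear endomorphism $\tx{Ad}(t)\circ(1-\tx{Ad}(s))$ of $\mf{g}$. Relative to the Chevalley decomposition $\mf{g}=\mf{t}\oplus\bigoplus_{\alpha\in\Phi}\mf{g}_\alpha$ over $\Z$, which presents each root space as a rank-one free direct summand, this endomorphism kills $\mf{t}$ and acts on each $\mf{g}_\alpha$ by the unit $\alpha(t)(1-\alpha(s))\in\Z_f^{\times}$. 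Hence the Zariski tangent space of $\psi^{-1}(1)$ at $t$ equals $\mf{t}$, coinciding with that of $T$, so Nakayama's lemma applied to the ideal of $T$ in $\psi^{-1}(1)$ forces the inclusion $T\hookrightarrow\psi^{-1}(1)$ to be an isomorphism in a Zariski neighborhood of $T$. Thus $T$ is open (and closed) in $\psi^{-1}(1)$, yielding $\psi^{-1}(1)=T\sqcup Z$ for some closed subscheme $Z$. To see $Z=\emptyset$, pass to any geometric fiber $\bar k$ of $\sp\Z_f$: $s$ remains strongly regular over $\bar k$, so the classical theory of centralizers in split reductive groups gives $C_G(s)^\circ(\bar k)=T(\bar k)$ (from $\alpha(s)\neq 1$) and $C_G(s)(\bar k)/T(\bar k)\hookrightarrow\{w\in W:w\cdot s=s\}=\{1\}$ (from the Weyl condition on $s$), whence $Z(\bar k)=\emptyset$ and $Z=\emptyset$.

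The principal delicacy lies in justifying that each root space $\mf{g}_\alpha$ is a rank-one free direct summand of $\mf{g}$ over $\Z_f$, which is what lets the unit-ness of $\alpha(s)-1$ translate into invertibility of the differential on root spaces; this is where the split simple structure of $G$ (through the Chevalley basis for $\mf{g}$) enters essentially. Once this structural input is in place, the tangent-space calculation is routine, and the fiberwise step reduces to the classical statement that a strongly regular semisimple element in a split reductive group over a field has a torus centralizer.
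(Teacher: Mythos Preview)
Your proof is correct and follows a genuinely different route from the paper's. Both begin identically—locate a strongly regular $\Q$-point of $T$ and spread it to a $\Z_f$-section $s$—but then diverge. The paper proceeds abstractly: it invokes generic flatness to make $H=\tx{Cent}(s,G)$ flat over $\Z_f$ (at the cost of enlarging $f$), observes that each geometric fiber of $H$ coincides with that of $T$, and then cites the structure result \cite[exp.\ 10, 4.9]{SGA3} (a flat affine group scheme whose fibers are tori is a torus) to conclude that $H$ is a torus, hence equals the maximal torus $T$. You instead compute the differential of the commutator map $\psi(g)=gsg^{-1}s^{-1}$ along $T$ explicitly via the Chevalley root-space decomposition of $\mf g$ over $\Z$, deduce that $T$ is open (and closed) in $\psi^{-1}(1)$, and then dispose of the complementary piece $Z$ by the same fiberwise fact the paper uses implicitly—that a strongly regular semisimple element over a field has centralizer exactly $T$.

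One small quibble: your ``Nakayama'' step is phrased loosely. Equality of Zariski tangent spaces only gives $I_z\subset\mf m_{C,z}^2$ (where $C=\psi^{-1}(1)$), not $I_z=\mf m_{C,z} I_z$. What actually happens is that lifting a regular system of parameters from $\mc O_{T,z}$ and applying Nakayama to $\mf m_{C,z}$ shows $\mc O_{C,z}$ has embedding dimension $r+1$; since $\dim\mc O_{C,z}\geq\dim\mc O_{T,z}=r+1$, the ring $\mc O_{C,z}$ is regular, hence a domain, whence $I_z=0$. This is standard and your conclusion stands. The trade-off between the two approaches: yours is more elementary and self-contained, avoiding both generic flatness and the SGA3 black box, while the paper's is shorter once one is willing to quote that machinery.
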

\begin{proof}
Consider the closed subscheme of $T$ given by
\[ \bigcup_{\alpha \in R(T,G)}\tx{ker}(\alpha) \cup  \bigcup_{w \in W} T^w \]
where $R(T,G)$ is the set of roots of $T$ in $G$ and $W=\tx{Norm}(G,T)/T$ is the Weyl group. Let $U$ be its complement in $T$. Then $U \rw T$ is an open immersion, which when composed with an isomorphism $T \cong \mb{G}_m^r$ and the open immersion $\mb{G}_m^r \rw \A_\Z^r$ provides an open immersion $U \rw \A_\Z^r$. Since $\A^r(\Q)$ is dense in $\A^r(\ol{\Q})$, it follows that $U(\Q) \neq \emptyset$. As $U$ is of finite type, any map $\tx{Spec}(\Q) \rw U$ factors as $\tx{Spec}(\Q) \rw \tx{Spec}(\Z_f) \rw U$ for some $f$. Thus we have a point $s : \tx{Spec}(\Z_f) \rw U$. We claim that this point satisfies the statement of the lemma. To lighten notation, let us base change to $\tx{Spec}(\Z_f)$. Consider the centralizer $H := \tx{Cent}(s,G)$. It is a closed subscheme of $G$, hence affine and of finite type over $\Z_f$, and contains $T$. By generic flatness, we may assume that $H$ is flat, after possibly changing $f$. By the choice of $s$, all fibers of $H$ and $T$ coincide. By \cite[exp. 10, 4.9]{SGA3}, $H$ is a torus, and since $T$ is a maximal torus, it follows that $H=T$.
\end{proof}

\begin{cor} \label{cor:center} There exists $f \in \Z\sm\{0\}$ such that for all schemes $S \rw \tx{Spec}(\Z_f)$ we have
\[ Z(G)(S) = Z(G(S)).\]
\end{cor}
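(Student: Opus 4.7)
The plan is to prove the nontrivial inclusion $Z(G(S)) \subset Z(G)(S)$; the opposite inclusion is immediate from the functorial definition of $Z(G)$. Let $f \in \Z \sm \{0\}$ and $s \in T(\Z_f)$ be supplied by the preceding lemma, and let $S \to \tx{Spec}(\Z_f)$ be arbitrary. Pulling back, I obtain a section $s_S \in T(S) \subset G(S)$, and the lemma, being a statement about closed subschemes that is stable under base change, gives $\tx{Cent}(s_S, G \times_{\Z_f} S) = T \times_{\Z_f} S$.

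Given $g \in Z(G(S))$, the first step is to show $g \in T(S)$. Since $g$ commutes with $s_S$ in the abstract group $G(S)$, $g$ determines an $S$-point of the centralizer scheme of $s_S$, which is $T \times_{\Z_f} S$ by the lemma. The second step is to show that $\alpha(g) = 1$ in $\mb{G}_m(S)$ for every root $\alpha \in R(T,G)$. For this I would use the root subgroups $x_\alpha \colon \mb{G}_a \to G$, which are closed immersions defined over $\Z$, together with the standard conjugation relation $t \, x_\alpha(a) \, t^{-1} = x_\alpha(\alpha(t) \, a)$ for sections $t \in T$. Applying this with $t = g$ and $a = 1 \in \Gamma(S, \mc{O}_S)$ and invoking centrality of $g$ in the abstract group $G(S)$ yields $x_\alpha(\alpha(g)) = g \, x_\alpha(1) \, g^{-1} = x_\alpha(1)$; injectivity of $x_\alpha$ on $S$-points then forces $\alpha(g) = 1$.

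Combining the two steps, the morphism $g \colon S \to T$ factors through the closed subscheme $\bigcap_{\alpha \in R(T,G)} \ker(\alpha) \subset T$, and by the standard scheme-theoretic description of the center of a split semisimple group scheme (SGA3, exp.~22), this intersection equals $Z(G)$. Hence $g \in Z(G)(S)$, as required, and the same $f$ furnished by the lemma works in the corollary.

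Conceptually, the main obstacle the argument must overcome is the passage from the abstract-group hypothesis ``$g$ is central in $G(S)$'' to the scheme-theoretic conclusion ``$g$ defines an $S$-point of $Z(G)$''. The strongly regular section $s$ produced by the lemma is precisely the device that enables this passage: a single commutation of $g$ with $s_S$ suffices to confine $g$ to $T$ as a scheme-theoretic statement, after which conjugation of $g$ with each $x_\alpha(1)$ upgrades the remaining abstract centrality into the pointwise triviality of every root character on $T$.
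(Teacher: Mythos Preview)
Your argument is correct and follows essentially the same route as the paper's proof: use the strongly regular section from the preceding lemma to force $g \in T(S)$, then test $g$ against each $x_\alpha(1)$ to deduce $\alpha(g)=1$, and finally invoke the identification $\bigcap_\alpha \ker(\alpha)=Z(G)$ from SGA3, exp.~22. The only cosmetic difference is that the paper phrases the second step as a proof by contradiction, whereas you compute $x_\alpha(\alpha(g))=x_\alpha(1)$ directly and appeal to injectivity of $x_\alpha$; these are the same argument.
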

\begin{proof} The inclusion $\subset$ is obvious from the definition of $Z(G)$ and we now have to show the converse. Choose $f$ and $s \in T(\Z_f)$ as in the above lemma. Let $S \rw \tx{Spec}(\Z_f)$ and $x \in Z(G(S))$. If $s_S \in T(S)$ denotes the image of $s$ under $T(\Z_f) \rw T(S)$, then
\[ x \in \tx{Cent}(s_S,G_S)(S) = T(S)\]
We claim that for every root $\alpha \in R(T_S,G_S)$ we have $\alpha(x)=1$. Assume by way of contradiction that this were not the case. Let $u_\alpha : \mb{G}_{a,S} \rw G_S$ be the root subgroup corresponding to $\alpha$, and $y = u_\alpha(1)$. Then $y \in G(S)$ is a point not centralized by $x$, contrary to the assumptions. It follows that
\[ x \in \bigcap_{\alpha \in R(T_S,G_S)} \ker(\alpha)(S) = Z(G)(S)\]
where the last equality is \cite[exp. 22, 4.1.6]{SGA3}.
\end{proof}

\begin{cor} \label{cor:centerord} There exists a finite set of primes $P$ such that $|Z(G_k)|$ divides $\tx{ord}(Z(G))$ for all primes $p \notin P$. In particular, if $m$ is an integer coprime to the elements of $P$, then the order of every element of $Z(G(m))$ divides $\tx{ord}(Z(G))$.
\end{cor}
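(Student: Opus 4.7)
The plan is to combine Corollary \ref{cor:center} with the explicit diagonalizable structure of $Z(G)$ recalled earlier. Set $n := \tx{ord}(Z(G))$, let $f_0 \in \Z \sm \{0\}$ be the integer produced by Corollary \ref{cor:center}, and take $P$ to be the (finite) set of primes dividing $n \cdot f_0$. I will show this $P$ works.

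First, suppose $p \notin P$ and $k \geq 1$. Since $p \nmid f_0$, the integer $f_0$ is a unit in $\Z/p^k\Z$, so the natural map $\tx{Spec}(\Z/p^k\Z) \rw \tx{Spec}(\Z_{f_0})$ is defined. Corollary \ref{cor:center} then gives
\[ Z(G_k) \;=\; Z(G(\Z/p^k\Z)) \;=\; Z(G)(\Z/p^k\Z). \]
So the task reduces to bounding $|Z(G)(\Z/p^k\Z)|$. By the discussion after \cite[exp.22, 4.1.8]{SGA3} recalled above, $Z(G)$ is a finite diagonalizable $\Z$-group scheme of the form $\prod_i \mu_{n_i}$ with $\prod_i n_i = n$. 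Since $p \nmid n$, each $\mu_{n_i}$ is \'etale over $\Z_p$, and the $\Z/p^k\Z$-points of $\mu_{n_i}$ form a subgroup of the $n_i$-th roots of unity in $\Z/p^k\Z$, whose order therefore divides $n_i$. Multiplying over $i$, we obtain $|Z(G)(\Z/p^k\Z)| \mid n$, as desired.

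For the ``In particular'' clause, let $m$ be coprime to every prime in $P$, and write $m = \prod_i p_i^{k_i}$. The Chinese Remainder Theorem gives an isomorphism of group schemes $G(\Z/m\Z) = \prod_i G(\Z/p_i^{k_i}\Z) = \prod_i G_{k_i}$, under which $Z(G(m)) = \prod_i Z(G_{k_i})$. Every element of $Z(G_{k_i})$ has order dividing $|Z(G_{k_i})|$, which we have just shown divides $n$; hence every element of the product has order dividing $n$ as well.

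The only point requiring any care is the claim that $|\mu_{n_i}(\Z/p^k\Z)|$ divides $n_i$ when $p \nmid n_i$; this is the mild obstacle in the argument, but it is standard and follows immediately from the \'etaleness of $\mu_{n_i}$ over $\Z[1/n_i]$ (or by a direct application of Hensel's lemma to $x^{n_i}-1$ over $\Z/p^k\Z$). Everything else is bookkeeping with Corollary \ref{cor:center} and the explicit structure of $Z(G)$.
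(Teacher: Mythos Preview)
Your proof is correct and follows essentially the same route as the paper: use Corollary~\ref{cor:center} to identify $Z(G_k)$ with $Z(G)(\Z/p^k\Z)$ for almost all $p$, then use the decomposition $Z(G)\cong\prod_i\mu_{n_i}$ to bound $|Z(G)(\Z/p^k\Z)|$. The only minor difference is in the last step: the paper argues that $|\mu_{n_i}(\Z/p^k\Z)|\mid n_i$ via the cyclicity of $(\Z/p^k\Z)^\times$ (valid for all odd $p$, without needing $p\nmid n$), whereas you enlarge $P$ to include the primes dividing $n$ and then invoke \'etaleness/Hensel; both are equally elementary and yield the same conclusion.
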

\begin{proof} The second statement is an immediate consequence of the first, since $Z(G(m)) = \prod_{p^k\|m}Z(G_k)$. To prove the first, let $P$ be the set of primes $p$ for which $Z(G)(\Z/p^k\Z)$ is a proper subgroup of $Z(G_k)$. According to Corollary \ref{cor:center} the set $P$ is finite. For a prime $p$ not in $P$, we then have $Z(G_k) = Z(G)(\Z/p^k\Z)$. As already remarked, $Z(G)$ is a finite product of $\mu_n$'s. Since $(\Z/p^k\Z)^\times$ is cyclic, the number $|\mu_n(\Z/p^k\Z)|$ divides $n$. The statements now follows.
\end{proof}

\begin{lem} \label{lem:red} The natural projection $\Z/p^k\Z \rw \Z/p^{k-1}\Z$ induces a surjective homomorphism
\[ G_k \rw G_{k-1} \]
For all but finitely many primes $p$, this homomorphism restricts to an isomorphism
\[ Z(G_k) \rw Z(G_{k-1}). \]
\end{lem}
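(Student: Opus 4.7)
The plan is to handle the two assertions independently. The surjectivity of $G_k \rw G_{k-1}$ holds for every prime $p$ and every $k \geq 2$; it follows from the fact that the Chevalley group scheme $G$ is smooth (indeed smooth affine) over $\Z$, so that in view of the nilpotence of the kernel of $\Z/p^k\Z \rw \Z/p^{k-1}\Z$, the infinitesimal lifting criterion for smooth morphisms provides a lift of each $\Z/p^{k-1}\Z$-point of $G$ to a $\Z/p^k\Z$-point. No restriction on $p$ is needed here.

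For the second assertion, the first step is to replace $Z(G_k)$ by the $\Z/p^k\Z$-points of the group scheme $Z(G)$. Corollary \ref{cor:center} supplies a finite set of primes $P_1$ such that for $p \notin P_1$ one has $Z(G_k)=Z(G)(\Z/p^k\Z)$ for every $k$, and under this identification the projection $Z(G_k)\rw Z(G_{k-1})$ becomes the functorial reduction map on the $\Z$-scheme $Z(G)$. Thus it is enough to show that for all but finitely many primes $p$ the reduction
\[ Z(G)(\Z/p^k\Z) \rw Z(G)(\Z/p^{k-1}\Z) \]
is a bijection.

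To finish, recall from the discussion preceding Corollary \ref{cor:center} that $Z(G)$ is a finite product of group schemes of the form $\mu_n$, and is étale over $\Z[\tx{ord}(Z(G))^{-1}]$. Take $P$ to be the union of $P_1$ with the set of primes dividing $\tx{ord}(Z(G))$. For $p \notin P$, the base change $Z(G)\times\tx{Spec}(\Z_p)$ is finite étale, and the unique-lifting property of étale schemes applied to the nilpotent thickening $\tx{Spec}(\Z/p^{k-1}\Z)\hookrightarrow\tx{Spec}(\Z/p^k\Z)$ delivers the required bijection. Concretely, this is just the Hensel-lemma observation that for $\gcd(n,p)=1$ the polynomial $x^n-1$ is separable modulo $p$, so that $\mu_n(\Z/p^k\Z)$ reduces bijectively onto $\mu_n(\F_p)=\mu_n(\Z/p^{k-1}\Z)$. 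All ingredients are standard once Corollary \ref{cor:center} is in hand, and I do not anticipate a genuine obstacle beyond the bookkeeping of the excluded primes.
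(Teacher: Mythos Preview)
Your proof is correct and follows essentially the same route as the paper: the surjectivity via the infinitesimal lifting criterion for smooth schemes, and the isomorphism on centers by first invoking Corollary~\ref{cor:center} to identify $Z(G_k)$ with $Z(G)(\Z/p^k\Z)$, then using that $Z(G)$ is \'etale over $\Z_{(p)}$ for $p\nmid\tx{ord}(Z(G))$. The only cosmetic differences are that the paper also cites Corollary~\ref{cor:centerord} (which is not strictly needed here) and that your concrete Hensel-lemma remark about $\mu_n$ is an added illustration rather than a distinct argument.
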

\begin{proof}
The first claim follows directly from the infinitesimal lifting property of smoothness. For the second claim, let $p$ be a prime which does not divide $\tx{ord}(Z(G))$ and for which $Z(G_k)=Z(G)(\Z/p^k\Z)$ for all $k$. By Corollaries \ref{cor:center} and \ref{cor:centerord} these are all but finitely many primes. Then $Z(G)$ is etale over $\Z_{(p)}$ and this implies the bijectivity of the second map.
\end{proof}

\begin{cor} \label{cor:centerless} Assume that $G$ is simply-connected. Then for all but finitely many $p$,
\[ Z(G_k/Z(G_k))=\{1\}. \] \end{cor}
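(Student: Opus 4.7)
The plan is to proceed by induction on $k$. I will work with primes $p$ that simultaneously satisfy four conditions, each of which excludes only finitely many primes: (i) $p \nmid \tx{ord}(Z(G))$, so that the group scheme $Z(G)$ is étale over $\Z_{(p)}$; (ii) $Z(G_k) = Z(G)(\Z/p^k\Z)$ for all $k \geq 1$, by Corollary \ref{cor:center}; (iii) the reduction map $Z(G_k) \rw Z(G_{k-1})$ is an isomorphism for every $k \geq 2$, by Lemma \ref{lem:red}; and (iv) $G(\F_p)/Z(G(\F_p))$ is a non-abelian simple group. Condition (iv) is the classical theorem of Chevalley for finite groups of Lie type in the simply-connected case, and is where the simply-connectedness of $G$ enters in an essential way. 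The base case $k=1$ is then condition (iv).

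For the inductive step, let $\bar g \in Z(G_k/Z(G_k))$ and lift to $g \in G_k$, so that $[g,h] \in Z(G_k)$ for every $h \in G_k$. Reducing modulo $p^{k-1}$, the image of $g$ in $G_{k-1}$ has all of its commutators with $G_{k-1}$ lying in the image of $Z(G_k)$, hence in $Z(G_{k-1})$. By the inductive hypothesis, the image of $g$ in $G_{k-1}$ lies in $Z(G_{k-1})$. Using (iii), pick $z \in Z(G_k)$ with the same reduction as $g$ and write $g = nz$, where $n := gz^{-1}$ lies in $N_k := \ker(G_k \rw G_{k-1})$.

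It now suffices to show $n = 1$. Since $z$ is central, $[n,h] = [g,h] \in Z(G_k)$ for every $h \in G_k$; since $N_k$ is normal in $G_k$, also $[n,h] \in N_k$. Thus $[n,h] \in N_k \cap Z(G_k)$, and the latter is precisely the kernel of the reduction $Z(G_k) \rw Z(G_{k-1})$, which is trivial by (iii). Consequently $n$ commutes with every element of $G_k$, so $n \in N_k \cap Z(G_k) = \{1\}$, and therefore $g = z \in Z(G_k)$ and $\bar g = 1$. The main obstacle is really condition (iv): it is the only ingredient that uses the simply-connectedness hypothesis in an essential way, and it is the deepest input, resting on the structure theory of finite Chevalley groups (e.g.\ Steinberg's generators and relations). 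The remainder of the argument is a straightforward bootstrap once the étale and center-lifting properties of $Z(G)$ established in Corollaries \ref{cor:center}--\ref{cor:centerord} and Lemma \ref{lem:red} are in hand.
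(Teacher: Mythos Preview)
Your proof is correct and follows essentially the same inductive strategy as the paper: both reduce to the simplicity of $G(\F_p)/Z(G(\F_p))$ for the base case and then use the isomorphism $Z(G_k)\cong Z(G_{k-1})$ from Lemma~\ref{lem:red} to bootstrap. The only cosmetic difference is that the paper concludes directly by observing that each commutator $[g,h]\in Z(G_k)$ has trivial image in $Z(G_{k-1})$ and hence vanishes, whereas you first lift the reduction of $g$ to some $z\in Z(G_k)$ and argue that $n=gz^{-1}$ is central; these are equivalent uses of the same injectivity statement.
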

\begin{proof}
We prove this by induction on $k$. The base case is $k=1$, which is known, since $G(\F_p)/Z(G(\F_p))$ is simple.
For the induction step, let $k>1$. Let $z \in G_k$ be an element which is central in $G_k/Z(G_k)$. Then for all $g \in G_k$, $z_g := gzg^{-1}z^{-1} \in Z(G_k)$. Under the surjection $G_k \rw G_{k-1}$, the element $z$ maps to an element $\bar z$ with the same property. Applying the induction hypothesis we see that $\bar z \in Z(G_{k-1})$. This implies, that $\bar z_g = 1$. Lemma \ref{lem:red} now implies $z_g=1$ and the statement follows.
\end{proof}

\noindent For $0 \leq i \leq k$, let $G_k^i := \tx{ker}(G_k \rw G_i)$. This provides a descending filtration
\[ G_k=G_k^0 \geq G^1_k \geq ... \geq G_k^k=\{1\}. \]

\noindent We fix a closed embedding $G \rw \tx{SL}_m$ defined over $\Z$.
This yields an embedding of Lie-algebras $\mf{g} \rw \tx{sl}_m$ defined over $\Z$. We identify $G$ and $\mf{g}$ with their respective images.
Clearly $G_k^i = [1+p^iM_m(\Z/p^k\Z)] \cap G_k$, and an element $1+p^ix \in G_k^i$ belongs to $G_k^{i+1}$ if and only if $x \equiv 0 \tx{\ mod\ } p$.

The following Lemma is a well-known result from the theory of Moy-Prasad filtrations \cite{MP94}.
\begin{lem}[Moy-Prasad] \label{lem:filq}\ \\[-20pt]
\begin{enumerate}
\item $[G_k^i,G_k^j] \subset G_k^{i+j}$.
\item For $1 \leq i \leq k-1$ the map
\[ G_k^i/G_k^{i+1} \rw \mf{g}(\F_p), \qquad 1+p^ix \mapsto x \tx{\ mod\ }p \]
induces an isomorphism of groups, which is equivariant with respect to the action of $G(\F_p)$ on both sides by conjugation.
\end{enumerate}
\end{lem}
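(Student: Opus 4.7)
Both parts are established by direct matrix calculation inside $M_m(\Z/p^k\Z)$, leveraging the closed embedding $G \hrw \tx{SL}_m$ and the smoothness of $G$ over $\Z$.

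For part (1), writing $g = 1 + p^i x \in G_k^i$ and $h = 1 + p^j y \in G_k^j$ with $x,y \in M_m(\Z/p^k\Z)$, a direct computation gives $gh - hg = p^{i+j}(xy - yx)$, hence
\[ [g,h] = ghg^{-1}h^{-1} = 1 + p^{i+j}(xy-yx)g^{-1}h^{-1} \in 1 + p^{i+j}M_m(\Z/p^k\Z). \]
Since the commutator automatically lies in $G_k$, we obtain $[g,h] \in G_k^{i+j}$.

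For part (2), I first verify that $\vp \co 1 + p^i x \mapsto x \bmod p$ is a well-defined group homomorphism from $G_k^i/G_k^{i+1}$ into $M_m(\F_p)$: the identity $(1+p^i x)(1+p^i y) = 1 + p^i(x+y) + p^{2i}xy$, combined with $2i \geq i+1$ for $i \geq 1$, gives the homomorphism property, and injectivity is immediate from the definition of $G_k^{i+1}$.

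The substantive step is identifying the image of $\vp$ with $\mf{g}(\F_p)$. Let $J \su \Z[\tx{SL}_m]$ be the ideal cutting out $G$, so that $\mf{g}(\F_p) \su \tx{sl}_m(\F_p)$ is the kernel of the differential of $J$ at the identity. For any $f \in J$, Taylor expansion at $1$ yields $f(1 + p^i x) \equiv p^i\, df_1(x) \bmod p^{i+1}$ (using $f(1)=0$ and $2i \geq i+1$), so $f(1+p^i x)=0$ forces $df_1(x) \equiv 0 \bmod p$, proving $\vp(G_k^i) \su \mf{g}(\F_p)$. For the reverse inclusion I would invoke smoothness of $G$ over $\Z$ of relative dimension $d = \dim G$: the reduction $G_{i+1} \thrw G_i$ is then surjective with kernel of cardinality $p^d$, and this kernel is precisely the image of $G_k^i$ in $G_{i+1}$, i.e. $G_k^i/G_k^{i+1}$. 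As $|\mf{g}(\F_p)| = p^d$ as well, the injection $G_k^i/G_k^{i+1} \hrw \mf{g}(\F_p)$ is forced to be surjective. Equivariance then follows from $h(1+p^i x)h^{-1} = 1 + p^i(hxh^{-1})$: given $\bar h \in G(\F_p)$, lift it (again by smoothness) to $h \in G_k$ and observe $hxh^{-1} \bmod p = \tx{Ad}(\bar h)(x \bmod p)$; independence of the lift is guaranteed by part (1), which yields $[G_k^1, G_k^i] \su G_k^{i+1}$. The main obstacle is precisely bridging the matrix-theoretic filtration with the intrinsic Lie algebra $\mf{g}(\F_p)$, which is what forces the appeal to smoothness of $G$; the remainder of the argument is elementary bookkeeping in $M_m(\Z/p^k\Z)$.
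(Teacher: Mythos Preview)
Your argument is correct. The paper itself does not give a proof of this lemma: it simply records it as ``a well-known result from the theory of Moy-Prasad filtrations'' and cites \cite{MP94}. Your proposal thus supplies a self-contained proof where the paper offers none.

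The approach you take is the standard direct one. The commutator computation for part~(1) and the homomorphism/injectivity check for part~(2) are routine. The only point with real content is the identification of the image with $\mf{g}(\F_p)$, and you handle this correctly: the Taylor-expansion argument shows the image lands in $\mf{g}(\F_p)$, and the cardinality count via smoothness (each fiber of $G_{i+1} \rw G_i$ has size $p^{\dim G}$, and $G_k^i/G_k^{i+1}$ is exactly the fiber over the identity) forces surjectivity. The equivariance argument, including the use of part~(1) to show independence of the lift of $\bar h$, is also fine. One cosmetic remark: when you speak of $J \subset \Z[\tx{SL}_m]$ and Taylor-expand $f \in J$ at the identity, it is cleanest to regard $f$ as a polynomial in $\Z[x_{ij}]$ (pulling back along $M_m \supset \tx{SL}_m$) so that the expansion at $1 + p^i x$ is literally defined; this changes nothing substantive.
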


\begin{rem} In particular, one sees inductively that each $G_k^i$ for $i>0$ is a $p$-group. \end{rem}

\begin{lem} \label{lem:dimfin} There exists positive constants $c,C$ such that for all prime powers $m=p^k$
\[ cm^{\dim(G)} \leq |G(m)| \leq Cm^{\dim(G)}. \]
\end{lem}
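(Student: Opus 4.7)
The plan is to reduce to counting $\F_p$-points of $G$ using the filtration $\{G_k^i\}$, and then to bound $|G(\F_p)|$ uniformly in $p$.

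First I apply Lemma \ref{lem:filq}(ii): each successive quotient $G_k^i/G_k^{i+1}$ for $1\leq i\leq k-1$ is isomorphic as a group to $\mf{g}(\F_p)$, which has order $p^{\dim(G)}$ since $\mf{g}$ is free of rank $\dim(G)$ over $\Z$. Telescoping yields $|G_k^1|=p^{(k-1)\dim(G)}$, and since the reduction map identifies $G_k/G_k^1$ with $G(\F_p)$, setting $m=p^k$ gives
\[
|G(m)| \;=\; |G(\F_p)|\cdot p^{(k-1)\dim(G)} \;=\; \frac{|G(\F_p)|}{p^{\dim(G)}}\cdot m^{\dim(G)}.
\]
Thus the lemma reduces to showing that $|G(\F_p)|/p^{\dim(G)}$ is bounded above and below by positive constants independent of $p$.

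For this I would invoke the Chevalley--Steinberg order formula: for a split simple Chevalley group, $|G(\F_p)|=p^N\prod_{i=1}^r(p^{d_i}-1)$, where $N$ is the number of positive roots, the $d_i$ are the invariant degrees of the Weyl group, and $N+\sum_i d_i=\dim(G)$. Dividing by $p^{\dim(G)}$ gives $\prod_i(1-p^{-d_i})$, which lies in the fixed compact interval $[\prod_i(1-2^{-d_i}),\,1)\subset(0,\infty)$ uniformly in $p\geq 2$.

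The main obstacle is ensuring uniformity across all isogeny classes of $G$ and all primes $p$. For non-simply-connected $G$, one must check that the formula above is isogeny-invariant, which follows from Lang's theorem ($H^1(\F_p,G^{sc})=0$) combined with the fact that $|H^1(\F_p,Z)|=|Z(\F_p)|$ for any finite diagonalizable $Z$ (Kummer theory). Should this detour prove awkward, a cleaner substitute is the Lang--Weil estimate: $G_{\F_p}$ is smooth and geometrically connected of dimension $\dim(G)$, so $|G(\F_p)|=p^{\dim(G)}+O(p^{\dim(G)-1/2})$ for all but finitely many $p$, with the exceptional primes absorbable into the constants $c$ and $C$.
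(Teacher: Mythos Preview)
Your proof is correct and follows essentially the same route as the paper's: use the filtration from Lemma~\ref{lem:filq} to obtain $|G_k|=p^{(k-1)\dim(G)}|G(\F_p)|$, and then bound $|G(\F_p)|/p^{\dim(G)}$ uniformly in $p$. The paper simply cites \cite[Theorem~25, Section~9]{S68} for the $k=1$ case, whereas you spell out the Chevalley--Steinberg order formula and the isogeny-invariance argument (with Lang--Weil as a fallback) explicitly.
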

\begin{proof} In the case $k=1$ the lemma follows from
\cite[Theorem 25, Section 9]{S68}. The general case reduces to this, because according to Lemma \ref{lem:filq} we have
$|G_k|=p^{(k-1)\dim(G)}|G(\F_p)|$.
\end{proof}

\begin{lem} \label{lem:noinv}
For all but finitely many $p$, the Lie-algebra $\mf{g}(\F_p)$ has no center, and the adjoint action of $G(\F_p)/Z(G(\F_p))$ on $\mf{g}(\F_p)$ is faithful and irreducible.
\end{lem}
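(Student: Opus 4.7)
My plan is to deduce all three assertions from their characteristic-zero counterparts by a spreading-out argument over $\tx{Spec}(\Z)$. Over $\Q$, the Lie algebra $\mf{g}_\Q$ is simple (so centerless), and the adjoint action of $G_\Q/Z(G)_\Q$ on $\mf{g}_\Q$ is faithful and irreducible; the task is to transfer each of these properties to $\F_p$ for almost all $p$.

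For the first assertion, view the adjoint map $\tx{ad}:\mf{g}\rw\tx{End}_\Z(\mf{g})$ as a $\Z$-linear map between free $\Z$-modules of finite rank. It is injective after tensoring with $\Q$, hence injective over $\Z$; its cokernel is a finitely generated abelian group whose torsion is supported on finitely many primes, so for all other $p$ reduction modulo $p$ preserves injectivity and $\mf{g}(\F_p)$ has trivial center. For the second assertion, interpret $\tx{Ad}$ as a morphism of affine $\Z$-group-schemes $G\rw\tx{GL}(\mf{g})$; its scheme-theoretic kernel $K$ is a closed subgroup scheme of $G$ containing $Z(G)$ and satisfying $K_\Q=Z(G)_\Q$. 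Hence $K/Z(G)$ is a finitely presented closed subscheme of $G/Z(G)$ with empty generic fiber, so it is supported on finitely many special fibers. Combined with Corollary~\ref{cor:center}, this yields $\ker(G(\F_p)\rw\tx{GL}(\mf{g}(\F_p)))=Z(G(\F_p))$ for all but finitely many $p$, which gives the desired faithfulness of $G(\F_p)/Z(G(\F_p))$.

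The main obstacle is irreducibility, which I plan to handle using the Chevalley structure. Fix a split maximal torus $T\su G$ and a Chevalley basis $\{h_\alpha, x_\alpha\}$, giving $\mf{g}=\mf{t}\oplus\bigoplus_{\alpha\in R}\mf{g}_\alpha$. For $p$ outside a finite set depending only on the root system of $G$, the following structural facts simultaneously hold: distinct roots restrict to distinct characters of $T(\F_p)$ (so every $T(\F_p)$-stable subspace of $\mf{g}(\F_p)$ is a sum of weight spaces), the coroots $h_\alpha$ span $\mf{t}(\F_p)$, the roots $\alpha$ span $\mf{t}^*(\F_p)$, and the Chevalley structure constants $N_{\alpha,\beta}$ are nonzero modulo $p$ whenever $\alpha+\beta\in R$. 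Moreover, for fixed $v$, the map $t\mapsto\tx{Ad}(u_\alpha(t))v$ is polynomial in $t$ of degree bounded by the root system, so for $p$ above this bound, polynomial interpolation over $\F_p$ extracts $[x_\alpha,v]$ from the values at $t\in\F_p$; hence $G(\F_p)$-invariance of a subspace $V$ forces $[x_\alpha,V]\su V$ for every root $\alpha$.

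Now let $0\neq V\su\mf{g}(\F_p)$ be $G(\F_p)$-invariant. The $T(\F_p)$-decomposition splits $V$ into weight spaces. If $V$ contains some root space $\mf{g}_\beta$, then the relation $[x_\alpha,x_\beta]=N_{\alpha,\beta}x_{\alpha+\beta}$ together with connectivity of the Dynkin diagram of $G$ forces every $\mf{g}_\gamma$ to lie in $V$, and then $h_\alpha=[x_\alpha,x_{-\alpha}]\in V$ combined with the spanning property fills in $\mf{t}(\F_p)$, yielding $V=\mf{g}(\F_p)$. Otherwise $V\su\mf{t}(\F_p)$, and for each root $\alpha$ and $h\in V$ the identity $[x_\alpha,h]=-\alpha(h)x_\alpha\in V\cap\mf{g}_\alpha=0$ gives $\alpha(h)=0$; the spanning property of the roots then forces $h=0$, contradicting $V\neq 0$. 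The main technical subtlety is the polynomial interpolation step, which is what produces the explicit bound on $p$; the remaining estimates only require $p$ to exceed a constant depending on the root system of $G$.
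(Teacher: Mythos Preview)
The paper does not actually prove this lemma; it records it as a classical fact and cites Hiss \cite{H84} and Hogeweij \cite{H82}. Your proposal instead supplies a direct argument, and it is essentially correct: the spreading-out proofs of centerlessness and faithfulness are fine, and reducing $G(\F_p)$-invariance to $\mf{g}(\F_p)$-invariance via polynomial interpolation on $t\mapsto\Ad(u_\alpha(t))v$, then arguing simplicity of $\mf{g}(\F_p)$ from the Chevalley relations, is the standard route. One imprecision worth fixing: you claim that from a single $x_\beta\in V$ the structure-constant relations together with ``connectivity of the Dynkin diagram'' already yield every $\mf{g}_\gamma$ before any $h_\alpha$ is produced. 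In rank one this step fails outright (no root $\alpha$ has $\alpha+\beta=-\beta$), and in higher rank what is actually needed is connectivity of the full root system under the relation ``differ by a root'', which is not the same as connectivity of the Dynkin diagram. The clean repair is to interleave: from $x_\beta$ first extract $h_\beta=[x_{-\beta},x_\beta]\in V$, then $[h_\beta,x_\alpha]=\langle\alpha,\beta^\vee\rangle\,x_\alpha$ puts $x_\alpha$ into $V$ for every $\alpha$ not orthogonal to $\beta$; irreducibility of the root system is precisely the statement that the roots cannot be partitioned into two mutually orthogonal nonempty sets, so this non-orthogonality relation is connected and the argument propagates to all of $\mf{g}(\F_p)$.
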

\begin{proof} This is a well-known classical result. See for example \cite{H84}, \cite{H82}. \end{proof}

\begin{lem} \label{lem:annuli} Assume that $p$ is sufficiently large, and $0 \leq i \leq k-2$. For every $g \in G_k^i \sm G_k^{i+1}Z(G_k)$ there exists $h \in G_k^1$ such that $hgh^{-1}g^{-1} \in G_k^{i+1} \sm G_k^{i+2}Z(G_k)$.
\end{lem}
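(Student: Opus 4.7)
The plan is to reduce the statement to a Lie-algebra calculation in $\mf g(\F_p)$ via Lemma \ref{lem:filq}(2), after first pruning the $Z(G_k)$'s out of the filtration conditions. Choose $p$ large enough that the conclusions of Lemmas \ref{lem:red} and \ref{lem:noinv} both hold. Iterating Lemma \ref{lem:red} yields an isomorphism $Z(G_k) \xrightarrow{\sim} Z(G(\F_p))$, and in particular $Z(G_k) \cap G_k^1 = \{1\}$. From this I would deduce two reductions: first, for any $j \geq 1$, $G_k^{j+1} Z(G_k) \cap G_k^j = G_k^{j+1}$ (writing an element of the intersection as $g' z$ with $g' \in G_k^{j+1}$ and $z \in Z(G_k)$ forces $z \in Z(G_k) \cap G_k^j \subseteq Z(G_k) \cap G_k^1 = \{1\}$); second, $G_k^1 Z(G_k)/G_k^1 = Z(G(\F_p))$ inside $G_k/G_k^1 = G(\F_p)$. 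Consequently, for $i \geq 1$ the hypothesis becomes $g \in G_k^i \sm G_k^{i+1}$ and the target becomes $[h,g] \in G_k^{i+1} \sm G_k^{i+2}$; for $i = 0$ the hypothesis becomes $\bar g \notin Z(G(\F_p))$ while the target still reads $[h,g] \in G_k^1 \sm G_k^2$.

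Next, I would parametrize $h \in G_k^1$ as $h = 1 + py$ with image $\bar y \in \mf g(\F_p)$ under Lemma \ref{lem:filq}(2). A direct expansion in $M_m(\Z/p^k\Z)$ shows that for $i \geq 1$, with $g = 1 + p^i x$,
\[
hgh^{-1}g^{-1} \equiv 1 + p^{i+1}[y,x] \pmod{p^{i+2}},
\]
whose image in $G_k^{i+1}/G_k^{i+2} \cong \mf g(\F_p)$ is the Lie bracket $[\bar y, \bar x]$. For $i = 0$ the same type of expansion gives
\[
hgh^{-1}g^{-1} \equiv 1 + p\bigl(y - gyg^{-1}\bigr) \pmod{p^2},
\]
whose image in $G_k^1/G_k^2 \cong \mf g(\F_p)$ is $(1 - \tx{Ad}(\bar g))\bar y$.

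Finally, Lemma \ref{lem:noinv} supplies the desired $\bar y$: in the case $i \geq 1$, $\bar x \neq 0$ is not in the center of $\mf g(\F_p)$ (which is trivial), so some $\bar y$ has $[\bar y, \bar x] \neq 0$; in the case $i = 0$, $\bar g$ projects non-trivially to $G(\F_p)/Z(G(\F_p))$, which acts faithfully on $\mf g(\F_p)$, so $\tx{Ad}(\bar g) \neq \tx{id}$ and some $\bar y$ has $(1 - \tx{Ad}(\bar g))\bar y \neq 0$. Lifting $\bar y$ through the isomorphism of Lemma \ref{lem:filq}(2) yields an $h \in G_k^1$ with the required commutator.

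The main obstacle, I expect, is the first step: systematically disposing of the central factors $Z(G_k)$ in the filtration conditions. Once one establishes that $Z(G_k)$ meets $G_k^1$ trivially and maps isomorphically onto $Z(G(\F_p))$, the rest is a routine Moy--Prasad commutator computation, using one half of Lemma \ref{lem:noinv} for each of the two cases.
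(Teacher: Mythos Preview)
Your proposal is correct and follows essentially the same approach as the paper: both proofs first use Lemma~\ref{lem:red} to reduce the filtration conditions involving $Z(G_k)$ to ones without it (the paper does this inline, you do it upfront), then write $h=1+py$ and compute the commutator modulo $p^{i+2}$, landing on $[\bar y,\bar x]$ for $i\geq 1$ and $\bar y-\Ad(\bar g)\bar y$ for $i=0$ (up to an immaterial sign), and finally invoke the two halves of Lemma~\ref{lem:noinv} to choose $\bar y$ appropriately.
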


\begin{proof}
Note first that by Lemma \ref{lem:red}, $G_k^{i+1} \cap (G_k^{i+2}Z(G_k)) = G_k^{i+2}$. Hence it is enough to find $h$ such that $hgh^{-1}g^{-1} \notin G_k^{i+2}$.

Write $h=1+py$ with some $y \in M_m(\Z/p^k\Z)$ to be determined. We will make use of the following computation: For any $x \in M_m(\Z/p^k\Z)$ we have
\begin{eqnarray*}
(1+py)x(1+py)^{-1}&=&(x+pyx)(1+py)^{-1}\\
&=&(x+pxy-p[x,y])(1+py)^{-1}\\
&=&(x-p[x,y](1+py)^{-1})
\end{eqnarray*}
where $[x,y]=xy-yx$.

First assume that $i=0$. Then using the above computation we see that
\[ hgh^{-1}g^{-1} = 1-p[g,y](1+py)^{-1}g^{-1} \]
Clearly the right hand side belongs to $G_k^1$, and to show that it does not belong to $G_k^2$ it is enough by Lemma \ref{lem:filq} to show that the reduction mod $p$ of the matrix $[g,y](1+py)^{-1}g^{-1} \in M_m(\Z/p^k\Z)$ is non-zero. Call this reduction $T$. It belongs to $\mf{g}(\F_p)$. Using the formula
\[ (1+py)^{-1} = \sum_{j=0}^{k-1} (-py)^j \]
we compute that $T=[\bar g,\bar y]\bar g^{-1} = \bar g\bar y\bar g^{-1}-\bar y$. By Lemma \ref{lem:red}, the preimage of $Z(G(\F_p))$ under $G_k \rw G(\F_p)$ is $G_k^1Z(G_k)$. Thus by assumption, the image $\bar g$ of $g$ in $G(\F_p)/Z(G(\F_p))$ is non-trivial, and by Lemma \ref{lem:noinv} there exists $\bar y \in \mf{g}(\F_p)$ such that $\bar g\bar y\bar g^{-1} \neq \bar y$. According to Lemma \ref{lem:filq}, there exists $h=1+py \in G^1_k$ corresponding to this $\bar y$. This completes the proof in the case $i=0$.

Now assume $i>0$. We write $g=1+p^ix$ for some $x \in M_m(\Z/p^k\Z)$ whose reduction mod $p$ belongs to $\mf{g}(\F_p)$. Then
\begin{eqnarray*}
&&(1+py)(1+p^ix)(1+py)^{-1}(1+p^ix)^{-1}\\
&=&(1+p^i(1+py)x(1+py)^{-1})(1+p^ix)^{-1} \\
&=&(1+p^ix-p^{i+1}[x,y](1+py)^{-1})(1+p^ix)^{-1}\\
&=&1- p^{i+1}[x,y](1+py)^{-1}(1+p^ix)^{-1}
\end{eqnarray*}
Again $hgh^{-1}g^{-1} \in G_k^{i+1}$, and we want to choose y so that this element does not belong to $G_k^{i+2}$. By Lemma \ref{lem:filq}, this is equivalent to the demand that the reduction mod $p$ of the element
\[ [x,y](1+py)^{-1}(1+p^ix)^{-1} \in M_m(\Z/p^k\Z) \]
be non-trivial. Using the formula for $(1+p^ix)^{-1}$ analogous to that used above for $(1+py)^{-1}$ we compute that this element is equal mod $p$ to $[x,y]$. Now we consider the image of $[x,y] \in M_m(\F_p)$. Of course, this is just the bracket of the images of $x$ and $y$ in $M_m(\F_p)$. But these images, and hence their bracket, lie in $\mf{g}(\F_p)$. Again as in the case $i=0$, specifying $h$ is equivalent to choosing the class of $y$ in $\mf{g}(\F_p)$ in such a way that its bracket with the class of $x$ is non-trivial. Since the Lie-algebra $\mf{g}(\F_p)$ has no center, the class of $x$ is non-central, and so an appropriate $y$ exists.
\end{proof}

\begin{pro} \label{ContainingCenterPro}
Assume that $p$ is sufficiently large and $G$ is simply-connected. Then every normal subgroup $N < G_k$ which contains $Z(G_k)$ equals $G_k^iZ(G_k)$ for some $i$.
\end{pro}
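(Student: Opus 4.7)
The plan is to let $i \in \{0,1,\dots,k\}$ be the largest index for which $N \subseteq G_k^i Z(G_k)$, and then establish the reverse inclusion $G_k^i Z(G_k) \subseteq N$. If $i = k$ then $N = Z(G_k)$ and we are done, so assume $i < k$. The maximality of $i$ together with $Z(G_k) \subseteq N$ produces an element $g \in N \cap G_k^i$ with $g \notin G_k^{i+1} Z(G_k)$, and since $Z(G_k) \subseteq N$ it suffices to prove $G_k^i \subseteq N$.

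The first technical step is to populate every filtration level above $G_k^i$ with an element of $N$ that is non-central modulo the next layer. Setting $g_0 := g$ and iterating Lemma \ref{lem:annuli}: as long as $i+j \leq k-2$, the lemma supplies $h_j \in G_k^1$ such that
\[
g_{j+1} \;:=\; h_j g_j h_j^{-1} g_j^{-1} \;\in\; G_k^{i+j+1} \setminus G_k^{i+j+2} Z(G_k),
\]
and normality of $N$ combined with $g_j \in N$ forces $g_{j+1} \in N$. Thus, for every $j$ with $i \leq j \leq k-1$, we obtain $g_{j-i} \in N \cap G_k^j$ whose image in $G_k^j/G_k^{j+1}$ is nonzero (using $G_k^{j+1} \subseteq G_k^{j+1}Z(G_k)$).

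Next, I would prove $G_k^j \subseteq N$ for $j = k-1, k-2, \dots, \max(i,1)$ by downward induction. By Lemma \ref{lem:filq}(1) the conjugation action of $G_k$ on $G_k^j/G_k^{j+1}$ is trivial on $G_k^1$ and therefore factors through $G_k/G_k^1 = G(\F_p)$; Lemma \ref{lem:filq}(2) identifies this quotient with $\mf{g}(\F_p)$ carrying the adjoint action. The image of the normal subgroup $N \cap G_k^j$ is then a $G(\F_p)$-invariant subspace of $\mf{g}(\F_p)$, and is nonzero by the previous step. Lemma \ref{lem:noinv} forces the image to be all of $\mf{g}(\F_p)$, so $N \cap G_k^j$ together with $G_k^{j+1}$ generates $G_k^j$; combined with the inductive hypothesis $G_k^{j+1} \subseteq N$ (vacuous at the base $j=k-1$), this yields $G_k^j \subseteq N$.

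If $i \geq 1$ we are finished. In the remaining case $i=0$, the induction gives $G_k^1 \subseteq N$, and it remains to analyse the image $\bar N$ of $N$ in $G_k/G_k^1 = G(\F_p)$. Iterating Lemma \ref{lem:red}, the map $Z(G_k) \to Z(G(\F_p))$ is an isomorphism, so $\bar N \supseteq Z(G(\F_p))$; moreover $\bar N$ strictly contains $Z(G(\F_p))$, because the image of $g$ lies outside $Z(G(\F_p))$ (as $g \notin G_k^1 Z(G_k)$). For $G$ simply-connected and $p$ sufficiently large, the group $G(\F_p)/Z(G(\F_p))$ is simple (the Chevalley--Steinberg theorem), so $\bar N = G(\F_p)$ and hence $N = G_k$. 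The main technical hurdle is the propagation in step two: a single element $g$ at the bottom level must force witnesses at every higher level, which is exactly why Lemma \ref{lem:annuli} is engineered with the stronger conclusion ``not in $G_k^{i+2} Z(G_k)$'' rather than merely ``not in $G_k^{i+2}$'', since only the stronger form allows the iteration to feed back into itself.
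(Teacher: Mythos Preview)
Your proof is correct and follows essentially the same approach as the paper's: both arguments pick the least $i$ with $N$ meeting $G_k^i \setminus G_k^{i+1}Z(G_k)$, use Lemma \ref{lem:annuli} to push a witness from $N$ into successively higher filtration levels, and then invoke the irreducibility of $\mf{g}(\F_p)$ (resp.\ the simplicity of $G(\F_p)/Z(G(\F_p))$ at level $0$) to conclude $G_k^j \subseteq N$ step by step. The only difference is organizational: the paper interleaves the two steps in a single descending induction (one application of Lemma \ref{lem:annuli} followed immediately by the irreducibility argument, with the induction hypothesis supplying $G_k^{i+1}\subset N$), whereas you first iterate Lemma \ref{lem:annuli} all the way up and then run the irreducibility argument downward as a separate induction.
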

\begin{proof}
We will first prove under the assumption $k>1$ by descending induction on $i$ the following statement.
\[ \forall 0 \leq i < k:\quad N\cap [G_k^i \sm G_k^{i+1}Z(G_k)] \neq \emptyset\quad \Rightarrow\quad G_k^i \subset N \]
The base case is when $i=k-1>0$. Then the isomorphism of Lemma \ref{lem:filq} identifies $G_k^i$ with $\mf{g}(\F_p)$ and $N \cap G_k^i$ with an invariant subspace of $\mf{g}(\F_p)$. By assumption this space is non-trivial, and by Lemma \ref{lem:noinv} it is all of $\mf{g}(\F_p)$, hence $N \cap G_k^i = G_k^i$. For the induction step, assume $i \geq 0$. Let $g \in N \cap [G_k^i \sm G_k^{i+1}Z(G_k)]$. Use Lemma \ref{lem:annuli} to obtain $h \in G_k^1$ such that $hgh^{-1}g^{-1} \in G_k^{i+1} \sm G_k^{i+2}Z(G_k)$. Then $hgh^{-1}g^{-1} \in N$, and we may apply the induction hypothesis to conclude $G_k^{i+1} \subset N$. Now look at the normal subgroup $(N \cap G_k^i)/G_k^{i+1}$ of $G_k^i/G_k^{i+1}$.
If $i>0$, then we have the isomorphism $G_k^i/G_k^{i+1} \rw \mf{g}(\F_p)$ and the image of that normal subgroup is a non-trivial invariant subspace. If $i=0$, then we have the isomorphism $G_k^i/G_k^{i+1} \rw G(\F_p)$ and the image of that normal subgroup is normal subgroup of $G(\F_p)$ which properly contains $Z(G(\F_p))$. In both cases, we conclude that $(N \cap G_k^i)/G_k^{i+1} = G_k^i/G_k^{i+1}$, and hence $N \cap G_k^i = G_k^i$. This completes the induction.

Now we show how the proposition follows from the above statement. The case $k=1$ is trivial since $G_1/Z(G_1)$ is simple. Thus assume $k>1$. If $N=Z(G_k)$ there is nothing to prove. Otherwise there exists a unique smallest index $i$ such that $G_k^i \sm G_k^{i+1}Z(G_k)$ contains an element of $N$. By the above statement, $Z(G_k)G_k^i \subset N$, but by minimality of $i$ this must in fact be an equality.
\end{proof}

\begin{pro} \label{PCPOntoPro}
Let $N$ be a natural number, and $H = \tx{ker}[G(\Z) \rw G(N)]$. If $G$ is simply-connected, then for any $m$ coprime to $N$ the projection $G(\Z) \rw G(m)$ maps $H$ surjectively onto $G(m)$.
\end{pro}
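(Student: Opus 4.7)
The plan is to reduce the statement to the fact that $G(\Z/m\Z)$ is generated by root-subgroup elements, which holds under the simply-connected assumption. Let $T \subset G$ be a split maximal torus defined over $\Z$ and let $u_\al : \mb{G}_{a,\Z} \rw G$ be the root subgroups attached to the roots $\al \in R(T,G)$. For $G$ simply-connected split semisimple, a theorem of Chevalley--Steinberg asserts that for every commutative ring $R$ in which the relevant structural relations can be read off, $G(R)$ is generated by $\{u_\al(r) : \al \in R(T,G),\ r \in R\}$; in particular this applies to $R=\Z/m\Z$, and covers the surjective image $\pi_m(G(\Z))$ in $G(m)$ needed here. (Equivalently, one may invoke strong approximation for the simply-connected $\Q$-group $G$, using that $G(\R)$ is non-compact, to assert outright that $G(\Z) \rw G(mN)$ is surjective.)

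Given this, the argument is short. Since $\gcd(m,N)=1$, the Chinese Remainder Theorem gives a natural isomorphism
\[ G(\Z/mN\Z) \;\cong\; G(m) \times G(N), \]
under which the projection $G(\Z) \rw G(mN)$ is identified with $(\pi_m,\pi_N)$. To show $\pi_m(H) = G(m)$ it therefore suffices to produce, for each generator $u_\al(a)$ of $G(m)$ (with $a \in \Z/m\Z$), an element of $G(\Z)$ whose reduction mod $N$ is trivial and whose reduction mod $m$ is $u_\al(a)$.

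For this, pick $\tilde a \in \Z$ with $\tilde a \equiv a \pmod{m}$ and $\tilde a \equiv 0 \pmod{N}$, which exists by CRT. Then $u_\al(\tilde a) \in G(\Z)$; since $u_\al$ is defined over $\Z$ and $u_\al(0) = 1$, we have $\pi_N(u_\al(\tilde a)) = u_\al(0) = 1$, so $u_\al(\tilde a) \in H$, while $\pi_m(u_\al(\tilde a)) = u_\al(a)$. Running over all $\al$ and all $a$ produces a set in $\pi_m(H)$ that generates $G(m)$, so $\pi_m(H) = G(m)$.

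The only non-routine input is the generation claim for $G(\Z/m\Z)$ by root-subgroup elements under the simply-connectedness hypothesis; this is the step where simple-connectedness is essential, as the failure of such generation is precisely what obstructs surjectivity for non-simply-connected groups (and is why the hypothesis appears in the statement). Once this is cited, the rest is a direct CRT lift.
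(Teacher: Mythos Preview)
Your proof is correct, and interestingly you mention both viable approaches. The paper takes the route you put in parentheses: it first proves the case $N=1$ by combining smoothness of $G$ (so $G(\hat\Z) \rw G(m)$ is onto) with strong approximation (so $G(\Z)$ is dense in $G(\hat\Z)$), and then handles general $N$ exactly by the CRT splitting $G(mN) \cong G(N)\times G(m)$ and observing that $H$ is the preimage of $\{1\}\times G(m)$.

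Your primary argument---generating $G(\Z/m\Z)$ by root-subgroup elements and lifting each $u_\al(a)$ explicitly through CRT to an element $u_\al(\tilde a)\in H$---is a genuinely different and more hands-on route. It avoids strong approximation entirely, trading it for the structural fact that $E(R)=G(R)$ when $G$ is simply connected and $R$ is semilocal (here $R=\Z/m\Z$ is a finite product of local rings). One small point: this generation result over local or semilocal rings is not really ``Chevalley--Steinberg''; their work covers fields, and the extension to local rings is due to Matsumoto, Abe, and Stein. The statement is nonetheless true for $\Z/m\Z$, so your argument stands. The trade-off is that your approach is more elementary and explicit, while the paper's is shorter once strong approximation is in hand and makes the role of simple-connectedness (needed for strong approximation) equally transparent.
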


\begin{proof}
We begin with the special case $N=1$, then $H=G(\Z)$.
Since $G$ is smooth, the natural projection $G(\Z_p) \rw G(\Z/p^k\Z)$ is
surjective for all primes $p$ and all natural numbers $k$, and hence the
natural projection $G(\hat \Z) \rw G(m)$ is surjective for all natural
numbers $m$. By strong approximation (\cite{PR94}), the inclusion $G(\Z) \rw
G(\hat \Z)$ has dense image. Thus, the natural projection $G(\Z) \rw G(m)$
is surjective.

For the general case, we have $G(Nm) \cong G(N) \times G(m)$, and by the
first part of the proof, the projection $G(\Z) \rw G(N) \times G(m)$ is
surjective. The preimage in $G(\Z)$ of the subgroup $1 \times G(m)$ of $G(N)
\times G(m)$ is precisely $H$, and maps surjectively onto $G(m)$.
\end{proof}

\begin{pro} \label{pro:unipmetric} Assume that the rank of $G$ is at least $2$. Let $u : \mb{G}_a \rw G$ be a root subgroup, and $X$ a finite generating set for $G(\Z)$. Then there exists a positive constant $M$ such that for any positive $z \in \Z$
\[ \|u(z)\|_X \leq M\log(z). \]
\end{pro}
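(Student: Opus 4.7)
The plan is to invoke the main theorem of Lubotzky--Mozes--Raghunathan \cite{LMR01}, flagged in the introduction as an essential input. For a higher-rank Chevalley group, LMR show that the word length on $G(\Z)$ with respect to any finite generating set is Lipschitz equivalent to an explicit ``norm'' on the group derived from matrix entries of a faithful representation. An immediate consequence is that for any root subgroup $u : \mb{G}_a \rw G$ and any nonzero integer $z$, one has $\|u(z)\|_X \leq M \log|z|$ for a constant $M$ depending only on $X$ and $G$. This is precisely the statement we need, so the proof reduces to a direct citation.

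For perspective on why the rank hypothesis is essential, the underlying algebraic mechanism is that rank at least $2$ lets one write the target root $\alpha$ as $\alpha = \beta + \gamma$ with $\beta, \gamma$ themselves roots. The Chevalley commutator formula then yields
\[
[u_\beta(x), u_\gamma(y)] = u_\alpha(c\, xy) \cdot (\text{contributions from longer roots}),
\]
with $c \in \Z$ nonzero. Iterating a doubling of $x$ against a doubling of $y$ writes $u_\alpha(z)$ as a word of length $O(\log z)$, at the cost of having to absorb the error terms from the longer-root contributions. Controlling these systematically is the technical heart of \cite{LMR01}, and is precisely what fails in rank $1$ (where, for instance, one has no nontrivial way to decompose a root).

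The main obstacle, were one to attempt a self-contained proof, is therefore the inductive bookkeeping of the higher-root error terms across a chain of root subgroups and the verification that the word-length overhead at each commutator step is bounded by an absolute constant. Since \cite{LMR01} dispatch this in full generality, the cleanest plan is simply to quote their theorem and be done.
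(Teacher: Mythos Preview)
Your proposal is correct and follows essentially the same route as the paper: both reduce the statement directly to Theorem~A of \cite{LMR01}, which bounds the word length of an element of $G(\Z)$ by a constant times the logarithm of its matrix norm in a fixed faithful representation. The paper is just slightly more explicit about the bridging step you call ``immediate,'' namely that since $u$ is a morphism of $\Z$-schemes the matrix entries of $u(z)$ are integer polynomials in $z$, so the matrix norm is bounded by $Cz^k$ for some $C,k$, whence its logarithm is $O(\log z)$.
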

\begin{proof}
Composing $u$ with the chosen closed embedding $G \rw \tx{SL}_m$, and then further with the natural inclusion $\tx{SL}_m \rw \tx{M}_m$, we obtain a morphism of $\Z$-schemes
\[ u' : \A_\Z^1 \rw \A_\Z^{m^2} \]
which is given by collection $\{u'_{i,j}\}$ of $m^2$-many polynomials in one variable with integral coefficients. Let $k = \max{\deg(u'_{i,j})}+1$. Then there exists a positive constant $C$ such that $u'_{i,j}(z) \leq Cz^k $ for all positive integers $z$ and all $i,j$. Thus $\|u'(z)\| \leq Cz^k$ for all $z \in \N$, where $\|\ \|$ is the maximum norm on $\tx{M}_m(\R)$. The result now follows from Theorem A in \cite{LMR01}.
\end{proof}

We are now ready to prove our main lower bound. In the proof, we are going to use the fact that if $G$ is simply-connected and has rank at least 2, then $G(\Z)$ has the congruence subgroup property. We refer the reader to \cite[Chap. 9.5]{PR94} for a discussion of this property.
Also recall that a subgroup of $G(m)$ is called \emph{essential} if it does not contain the kernel of the natural map $G(m) \to G(r)$ for any $r |m$ with $r < m$.
\begin{thm} \label{MainLowerBoundPro}
Assume that the rank of $G$ is at least $2$. Let $K$ be a number field, $f \in \mc{O}_K$, and $\Delta$ a finitely generated subgroup of $G(\C)$ with the property that $\Delta \cap G(\mc{O}_{K,f})$ is of finite-index in $G(\mc{O}_{K,f})$.
Then
\[ \DM_\Delta(n) \succeq n^{\dim(G)}. \]
\end{thm}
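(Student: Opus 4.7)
The plan is to reduce the statement to a lower bound for a principal congruence subgroup of the simply-connected cover of $G$, and then to exhibit explicit test elements coming from a root subgroup. First, since $\Gamma_0 := \Delta \cap G(\mc{O}_{K,f})$ is a subgroup of $\Delta$, Lemma \ref{DivisibilityAsymptoticLemma}(1) gives $\DM_{\Gamma_0} \preceq \DM_\Delta$. Let $\pi : \tilde G \to G$ denote the central isogeny with $\tilde G$ simply-connected, and set $\tilde \Gamma := \pi^{-1}\bigl(\Gamma_0 \cap \pi(\tilde G(\mc{O}_{K,f}))\bigr) \subseteq \tilde G(\mc{O}_{K,f})$; the map $\pi$ restricts to a surjection of $\tilde \Gamma$ onto a finite-index subgroup of $\Gamma_0$ with finite central kernel, so combining both parts of Lemma \ref{DivisibilityAsymptoticLemma} yields $\DM_{\tilde \Gamma} \preceq \DM_{\Gamma_0}$. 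Finally, using the congruence subgroup property for $\tilde G$ in rank $\geq 2$, the normal core of $\tilde \Gamma$ inside $\tilde G(\mc{O}_{K,f})$ contains some principal congruence subgroup $H_0 := \tilde G(\mc{O}_{K,f}, \mathfrak{a}_0)$, and a further application of Lemma \ref{DivisibilityAsymptoticLemma}(1) reduces the task to proving $\DM_{H_0}(n) \succeq n^{\dim(G)}$.

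For the test elements, fix a root subgroup $u : \mathbb{G}_a \to \tilde G$ defined over $\Z$ and a nonzero integer $M_0 \in \mathfrak{a}_0 \cap \Z$, and set
\[ \gamma_k := u\!\bigl(M_0 \cdot \lcm(1,2,\ldots,k)\bigr) \in H_0. \]
By the prime number theorem, $\log \lcm(1,\ldots,k) \sim k$, and the $S$-arithmetic refinement of Proposition \ref{pro:unipmetric}, which is the content of Theorem A of \cite{LMR01} in rank $\geq 2$ (with the passage from $\tilde G(\mc{O}_{K,f})$ to the finite-index subgroup $H_0$ absorbing only multiplicative constants), furnishes a constant $C > 0$ such that $\|\gamma_k\|_X \leq Ck$ for any fixed finite generating set $X$ of $H_0$.

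The heart of the argument is to show that any finite-index normal subgroup $N \triangleleft H_0$ with $\gamma_k \notin N$ satisfies $[H_0 : N] \geq c\, k^{\dim(G)}$. The congruence subgroup property supplies an ideal $\mathfrak{a} \subseteq \mathfrak{a}_0$ with $\tilde G(\mc{O}_{K,f}, \mathfrak{a}) \subseteq N$, so $H_0/N$ is a quotient of $H_0/\tilde G(\mc{O}_{K,f}, \mathfrak{a})$, which embeds into $\tilde G(\mc{O}_{K,f}/\mathfrak{a})$. Using the Chinese Remainder decomposition $\mc{O}_{K,f}/\mathfrak{a} = \prod_\mathfrak{q} \mc{O}_{K,\mathfrak{q}}/\mathfrak{q}^{k_\mathfrak{q}}$, applying Proposition \ref{ContainingCenterPro} at each local factor (its proof goes through verbatim for any non-archimedean local ring), and bookkeeping the valuations $v_\mathfrak{q}(M_0 \lcm(1,\ldots,k)) = v_\mathfrak{q}(M_0) + e_\mathfrak{q} \lfloor \log_p k \rfloor$, one finds at least one prime $\mathfrak{q} | \mathfrak{a}$ of residue characteristic $p$ for which the associated Moy--Prasad depth $i_\mathfrak{q}$ at which $N$ is nontrivial satisfies $i_\mathfrak{q} > v_\mathfrak{q}(M_0 \lcm(1,\ldots,k))$. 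The elementary inequality $p^{\lfloor \log_p k \rfloor} \geq k/p$ then forces $(N\mathfrak{q})^{i_\mathfrak{q}} \geq k$ in every case (whether $p \leq k$ or $p > k$), and Lemma \ref{lem:dimfin}, adapted to $\mc{O}_{K,\mathfrak{q}}/\mathfrak{q}^{k_\mathfrak{q}}$, promotes this to the local contribution $c\, k^{\dim(G)}$ to $[H_0:N]$.

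The main obstacle will be this third paragraph: one needs to verify that Proposition \ref{ContainingCenterPro} and the related structure results of Section \ref{LowerBounds} (stated over $\Z/p^k\Z$) extend \emph{mutatis mutandis} to the local rings $\mc{O}_{K,\mathfrak{q}}/\mathfrak{q}^{k_\mathfrak{q}}$, and to check that the finite central congruence kernel for $\tilde G$ over the number field $K$ can be absorbed into constants without spoiling the asymptotic lower bound. Once this is granted, combining $\|\gamma_k\|_X \leq Ck$ with $\D_{H_0}(\gamma_k) \geq c\, k^{\dim(G)}$ gives $\DM_{H_0}(Ck) \geq c\, k^{\dim(G)}$, hence $\DM_{H_0}(n) \succeq n^{\dim(G)}$, and tracing the reductions back to $\Delta$ completes the proof.
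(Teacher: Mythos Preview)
Your outline is on the right track and shares the paper's skeleton (reduce to the simply-connected cover, pass to a congruence subgroup via CSP, use root-subgroup elements built from $\lcm(1,\dots,k)$, invoke \cite{LMR01} for word length, and read off the index via the Moy--Prasad filtration), but it diverges from the paper at two load-bearing points and leaves a genuine gap at a third.

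First, the paper does \emph{not} stay over $\mc{O}_{K,f}$. After passing to $G_{\tx{sc}}$, it observes that $\Delta\cap G(\Z)$ has finite index in $G(\Z)$ and then replaces $\Delta$ by a principal congruence subgroup of $G(\Z)$. This single move lets every structural result of Section \ref{LowerBounds} (stated over $\Z/p^k\Z$) apply verbatim, and it makes the congruence kernel over $\Z$ trivially irrelevant. Your decision to work over $\mc{O}_{K,f}$ forces you to re-prove Proposition \ref{ContainingCenterPro}, Lemma \ref{lem:noinv}, etc.\ over all completions of $K$, and to control a possibly nontrivial congruence kernel; you note this as an obstacle, but it is simply avoided in the paper.

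Second, and this is the real gap you do not flag: the step ``applying Proposition \ref{ContainingCenterPro} at each local factor'' presupposes that the image of $N$ in $\prod_\mathfrak{q}\tilde G(\mc{O}/\mathfrak{q}^{k_\mathfrak{q}})$ decomposes as a product along the CRT factors, so that you can isolate a single $\mathfrak{q}$ at which $\gamma_k$ survives. Normal subgroups of a product do not split this way in general; the paper earns this by first killing the centers. Concretely, it works with $\DM_\Delta^N$ for $N=\tx{ord}(Z(G))$ (legitimized by Lemma \ref{SelbergLemma}), shows that $(cA_k')^N=(A_k')^N$ for every central $c$, hence $A_k'\notin H'Z(B)$, and only then passes to $B/Z(B)$, where Corollary \ref{cor:centerless} and \cite[1.4]{LL01} guarantee that every normal subgroup is a product of local pieces. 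Without this $F^N$/centerless manoeuvre, you cannot localize the problem, and Proposition \ref{ContainingCenterPro} (which classifies only those normal subgroups \emph{containing} $Z(G_k)$) does not apply to the bare image of $N$.

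Third, the paper does not treat all primes uniformly. It fixes a finite bad set $S$, multiplies the test element by $\alpha^k=\bigl(\prod_{p\in S}p\bigr)^k$, and handles the $S$-component by an entirely different argument (essentiality and \cite[Prop.~6.1.2]{LS03}), while the Moy--Prasad analysis is used only at primes outside $S$. Your fixed $M_0$ does not grow with $k$, so at a bad prime you could have $i_\mathfrak{q}$ small and no way to invoke Proposition \ref{ContainingCenterPro} (which explicitly requires $p$ large). Either you must reinstate the $\alpha^k$ factor and the separate bad-prime argument, or you must prove uniform versions of the Section \ref{LowerBounds} results valid at \emph{all} primes, which is not what the paper does.
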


\begin{proof} 
Let $G_\tx{sc}$ be the simply connected cover of $G$, and $p : G_\tx{sc}(\mc{O}_{K,f}) \rw G(\mc{O}_{K,f})$ the natural map. Then $\Delta_\tx{sc} := p^{-1}(\Delta \cap G(\mc{O}_{K,f}))$ is of finite index in $G_\tx{sc}(\mc{O}_{K,f})$ and the map $p : \Delta_\tx{sc} \rw \Delta$ has finite kernel. By Lemma \ref{DivisibilityAsymptoticLemma} we may assume for the rest of the proof that $G=G_\tx{sc}$ and $\Delta \subset G(\mc{O}_{K,f})$.

Since $\Delta$ is of finite-index in $G(\mc{O}_{K,f})$, so is $\Delta \cap G(\Z)$ of finite-index in $G(\Z)$. By virtue of the congruence subgroup property of $G(\Z)$, we can find a principal congruence subgroup $\Delta' \subset \Delta \cap G(\Z)$. Applying again Lemma \ref{DivisibilityAsymptoticLemma}, we may assume for the rest of the proof that $\mc{O}_{K,f}=\Z$ and that $\Delta$ is a principal congruence subgroup of $G(\Z)$.

Let $N = \tx{ord}(Z(G))$.
By Lemma \ref{SelbergLemma}, it suffices to find a lower bound for $F^N_\Delta$.
Loosely speaking, we will see that working with $F^N_\Delta$ instead of $F_\Delta$ will aid us in ignoring certain central elements in finite images of $\Delta$.

We first construct candidates that are poorly approximated by finite quotients.
Let $X$ and $Y$ be finite generating sets for $G(\Z)$ and $\Delta$ respectively. Let $S$ be the set of primes $p$ for which at least one of the following conditions fails

\begin{itemize}
\item $|Z(G_k)|$ divides $N$,
\item If $Z(G_k) \trianglelefteq N \trianglelefteq G_k$ then $N=G_k^iZ(G_k)$ for some $i$,
\item The projection $G(\Z) \rw G_k$ maps $\Delta$ surjectively onto $G_k$.
\end{itemize}
where as before $G_k=G(\Z/p^k\Z)$. By Corollary \ref{cor:centerord} and Propositions \ref{ContainingCenterPro} and \ref{PCPOntoPro} this set is finite.
Put $\alpha = \prod_{p \in S} p$ and $r_k = \alpha^k\lcm(1, \ldots, k)$. Let $u : \mb{G}_a \rw G$ be a root subgroup, and $B_k = u(r_k)$. Since $u$ is defined over $\Z$, we have $B_k \in G(\Z)$, hence $A_k := B_k^{[G(\Z) : \Delta]} \in \Delta$. The elements $A_k$ will be our candidates for achieving lower bounds for $F_\Delta^N$.

Next, we bound the word length of $A_k$, i.e. the function $k \mapsto \|A_k\|_Y$.
By Proposition \ref{pro:unipmetric} there exists a natural number $M$ such that
\[\|A_k\|_X  \leq M \log( \lcm(1, \ldots, k) \alpha^k).\]
Hence, by the prime number theorem we may find a potentially different natural number $M$ so that
$\|A_k\|_X \leq M k.$
Finally, since $G(\Z)$ is quasi-isometric to $\Delta$, we have that
\begin{equation} \label{CandidateBallSize}
\| A_k \|_Y \leq M k,
\end{equation}
for a some other natural number $M$.

The remainder of the proof is devoted to finding a lower bound for the cardinality of any finite quotient $Q=\Delta/H$ which detects $A_k^N$, in particular to the quotient realizing $D^N_\Delta(A_k)$. We start by taking one such quotient $Q$. Since we are looking for a lower bound of the cardinality of $Q$, we may replace it by either a subgroup or a quotient of it, and we will do so repeatedly in the following.

By the congruence subgroup property for $G(\Z)$ there exists a natural number $m$ such that the kernel of the  projection $\phi: G(\Z) \to G(m)$ lies in $H$.
Let $\Delta'$,$H'$, and $A_k'$ be the images of $\Delta$, $H$, and $A_k$ respectively in $G(m)$.
By the Chinese remainder theorem, we may write $G(m) = A \times B$ where
\[
A = \prod_{\substack{p^j \| m\\ p \in S}} G(p^j) \qquad \text{ and }\qquad B =  \prod_{\substack{p^j \| m\\ p \not\in S}} G(p^j).
\]
and $p^j\|m$ means that $j$ is the greatest power of $p$ which divides $m$.

We know $(A_k')^N \neq 1$. For any $c \in Z(B)$ we have $\tx{ord}(c)|N$ (see Corollary \ref{cor:centerord} and the choices of $S$ and $N$). Thus we have $(c A_k')^N = (A_k')^N$ for any $c \in Z(B)$, which implies $c A_k' \notin H'$.
Hence, $A_k' \notin H' Z(B)$.
Letting $A_k''$, $\Delta''$, and $H''$ be the images of $A_k'$, $\Delta'$, and $H'$ in $A \times B/Z(B)$ respectively, we have that $A_k'' \notin H''$.
Further, $[\Delta'':H''] \leq [\Delta':H']$ since $\Delta''/H''$ is an image of $Q = \Delta'/H'$.

We claim that any quotient of $B/Z(B)$ is centerless: Indeed, by the choice of $S$, for every $p \notin S$, Lemma \ref{lem:red} and Corollary \ref{cor:centerless} imply that all quotients of $G(p^j)/Z(G(p^j))$ are centerless. By \cite[1.4]{LL01} every normal subgroup of $B/Z(B)$ is a product of normal subgroups of the factors of $B/Z(B)$, and the statement follows.

Recall that $\Delta$ was assumed to be a principal congruence subgroup of $G(\Z)$. By Proposition \ref{PCPOntoPro}, $G(\Z)$ projects onto $A \times B/Z(B)$. Hence, $\Delta''$ is normal in $A \times B/Z(B)$, and applying \cite[1.3,1.4]{LL01} we see that
$\Delta'' = \Delta_1 \times \Delta_2$, where
\[
\Delta_1 = \pi_1(\Delta'') \qquad \text{ and } \qquad \Delta_2 =  \pi_2(\Delta''),
\]
where $\pi_1$ and $\pi_2$ are the natural projection maps of $A \times B/Z(B)$ onto $A$ and $B/Z(B)$ respectively.

By the choice of $S$ we have $\Delta_2=B/Z(B)$. The subgroup $H''$ is normal in $\Delta'' = \Delta_1 \times \Delta_2$, and since $\Delta_2$ has no center, \cite[Corollary 1.4]{LL01} applies again giving $H'' = H_1 \times H_2$ where $H_1 = \pi_1(H'')$ and $H_2 = \pi_2(H'')$.
Now since $A_k'' \notin H_1 \times H_2$ we have two cases: $\pi_1(A_k'') \notin \pi_1 (H'')$ or $\pi_2(A_k'') \notin \pi_2(H'').$
In both cases, we claim that there exists a natural number $M$, independent of $k$, such that
$M|Q| \geq k^d$, where $d:=\dim(G)$.

We first handle the case $\pi_1(A_k'') \notin \pi_1 (H'')$.
Write $A = G(m_0)$, let $r$ be the smallest natural number such that the kernel of the natural map $\phi: G(m_0) \to G(r)$ is contained in $\pi_1(H'')$.
Then $\phi(\pi_1(A_k'')) \notin \phi(\pi_1(H''))$ and $\phi(\pi_1(H''))$ is essential or trivial.
Since the image of $A_k$ in $G(r)$ is nontrivial, $r$ does not divide $\alpha^k$. But any prime dividing $r$ also divides $\alpha$ (recall the choices of $A$, $r$ and $\alpha$), hence $p^k|r$ for some $p \in S$.
In the case $\phi(\pi_1(H''))$ is essential, \cite[Proposition 6.1.2]{LS03} gives
$C [G(r): \phi(\pi_1(H''))] \geq r \geq p^k$, where $C$ is a natural number that only depends on $G$.
If $\phi(\pi_1(H''))$ is trivial, we get the better bound $C |G(r)| \geq C|G(p^k)|\geq p^{kd}$ by Lemma \ref{lem:dimfin} where $C$ is again a natural number that depends only on $G$.
Set $M' = C [G(\Z): \Delta]$. Since $[G(r) : \phi(\pi_1(\Delta''))] \leq [G(\Z): \Delta]$, we have
\[
M' [\Delta'' : H'' ] \geq C [G(r) : \phi(\pi_1(\Delta''))] [\phi(\pi_1(\Delta'')) : \phi(\pi_1( H''))] = C [G(r) : \phi(\pi_1( H''))] \geq p^k.
\]

There exists a natural number $M''$ such that $M'' p^k \geq k^d$ for all $p \in S$ and $k \in \N$.
Setting $M = M' M''$, we see that
\[
M [\Delta '' : H''] \geq M'' p^k \geq k^d.
\]
Since $|Q| \geq [\Delta'' : H'']$, the claim is shown.

Next we handle the case $\pi_2(A_k'') \notin \pi_2(H'')$.
By repeated use of Corollary 1.4 in \cite{LL01}, there exists a natural projection $\phi :A\times B/Z(B) \to G_k/Z(G_k)$ with $\phi(A_k'') \notin \phi(H'')$ and $G_k = G(p^k)$ where $p \notin S$.
By Proposition \ref{ContainingCenterPro} and the normality of $H_2$ in $\Delta_2 = B/Z(B)$, we have $\phi(H'') = G_k^i/Z(G_k)$ for some $i$, hence the image of $\phi(A_k'')$ through the natural projection onto $G_i/Z(G_i)$ is non-trivial. Further, $Q$ maps onto $G_i/Z(G_i)$.

From the estimate $M' |G_i| \geq p^{di}$ (Lemma \ref{lem:dimfin}), where $M'$ is a natural number, and the fact that $p^i$ does not divide $\lcm(1, \ldots, k)$ we obtain $p^i \geq k$, and thus,
$$
M' |G_i| \geq p^{id} \geq k^d.
$$
Finally, since $|G_i|/|Z(G_i)| \leq |Q|$ and $|Z(G_i)| \leq N$ (by the choice of $S$), the claim holds with $M = M'N$.

The inequality  $M |Q| \geq k^d$ in tandem with Inequality (\ref{CandidateBallSize}) gives some natural number $M$ such that
$M F_\Delta^N( k) \geq k^d,$ finishing the proof of the theorem.
\end{proof}


\section{Upper bounds} \label{UpperBoundSection}

In this section, $G$ continues to be a Chevalley group.
Our main upper bound result is a corollary of the following three propositions.

\begin{pro} \label{OLCase}
Let $L$ be a number field with ring of integers $\mc{O}_L$. Then
\[ \DM_{\mathcal{O}_{L}}(n) \approx \log(n). \]
Moreover, the finite quotients of the form $\Z/ p\Z \cong \mathcal{O}_L/\mf{p}$, where $p$ is a prime number that splits completely in $\mathcal{O}_L$, $\mf{p} | p\mathcal{O}_{L}$, are enough to obtain the upper bound.
\end{pro}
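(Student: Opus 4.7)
The lower bound follows formally: $\Z\cdot 1 \subset \mc{O}_L$ is a subgroup (of abelian groups), so Lemma~\ref{DivisibilityAsymptoticLemma}(1) gives $\DM_{\Z} \preceq \DM_{\mc{O}_L}$, and combining with $\DM_{\Z}(n) \approx \log(n)$ from \cite[Corollary~2.3]{B10} yields $\DM_{\mc{O}_L}(n) \succeq \log(n)$.

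For the upper bound I would fix an integral $\Z$-basis $e_1,\ldots,e_r$ of $\mc{O}_L$ (with $r=[L:\Q]$) and take $X=\{e_1,\ldots,e_r\}$ as the generating set for the abelian group $\mc{O}_L$. Given a nonzero $\gamma = \sum a_i e_i$ with $\|\gamma\|_X \leq n$, we have $|a_i| \leq n$ for each $i$, so $d := \gcd(a_1,\ldots,a_r)$ satisfies $1 \leq d \leq n$. The central observation is that if $p$ is a rational prime that splits completely in $\mc{O}_L$, then $p\mc{O}_L = \prod_{\mf{p}\mid p}\mf{p}$ with every residue field $\mc{O}_L/\mf{p} \cong \F_p$, and
\[
\gamma \in p\mc{O}_L \iff p \mid a_i \text{ for all } i \iff p \mid d.
\]
Hence whenever $p \nmid d$, some $\mf{p} \mid p$ satisfies $\gamma \notin \mf{p}$, so the advertised quotient $\mc{O}_L/\mf{p} \cong \F_p$, of size $p$, detects $\gamma$.

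It remains to bound the smallest completely split prime $p_*$ with $p_* \nmid d$. If every completely split prime $q < p_*$ divided $d$, then pairwise coprimality would force $\prod_{q<p_*,\,q\text{ c.s.}} q \mid d$, and hence
\[
\sum_{q<p_*,\,q\text{ c.s.}} \log q \;\leq\; \log d \;\leq\; \log n.
\]
The Chebotarev density theorem, applied to the trivial conjugacy class of $\Gal(\widetilde L/\Q)$ (where $\widetilde L$ is the Galois closure of $L$ over $\Q$), gives the Chebyshev-type asymptotic $\sum_{q\leq x,\,q\text{ c.s.}} \log q \sim x/[\widetilde L:\Q]$. Combining these, $p_* \leq (1+o(1))\,[\widetilde L:\Q]\,\log n = O(\log n)$, so $\DM_{\mc{O}_L}(n) \preceq \log(n)$ is realized using only the advertised quotients. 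The only non-elementary ingredient is Chebotarev's density theorem (in its Chebyshev-function form); the rest is elementary divisibility and a pigeonhole argument on gcds, so I anticipate no serious obstacle beyond invoking the standard asymptotic cleanly.
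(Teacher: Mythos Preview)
Your proof is correct and follows essentially the same approach as the paper's. Both obtain the lower bound from $\Z \subset \mc{O}_L$ via Lemma~\ref{DivisibilityAsymptoticLemma}, and for the upper bound both fix an integral basis, bound the coefficients by $n$, and use Chebotar\"ev together with a prime-number-theorem estimate to locate a completely split prime $p = O(\log n)$ not dividing a chosen nonzero integer $\leq n$ (a single nonzero coefficient $a_k$ in the paper versus $d=\gcd(a_i)$ in yours), which then yields a detecting quotient $\mc{O}_L/\mf{p} \cong \F_p$.
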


\begin{proof} The fact $\DM_{\mathcal{O}_{L}}(n) \succeq \log(n)$ follows immediately from \cite[Thm. 2.2]{B10} and Lemma \ref{DivisibilityAsymptoticLemma}. Thus it is enough to prove $\DM_{\mathcal{O}_{L}}(n) \preceq \log(n)$.

Let $S = \{b_1, \ldots, b_k \}$ be an integral basis for $\mathcal{O}_L$, and fix a nontrivial $g$ in $\mathcal{O}_L$ with $\|g \|_S = n$.
Then $g = \sum_{i=1}^n a_i b_i$ where $a_i \in \mathbb{Z}$ and $|a_i| \leq n$.
Since $g\neq 0$ there exists  $k$ such that $a_k \neq 0$.
By the Cebotar\"ev density theorem, the set $P$ of all primes in $\mathbb{Z}$ that split in $\mathcal{O}_L$ has nonzero natural density in the set of all primes.
We claim that there exists $C > 0$, which does not depend on $n$, and a prime $q$ such that $(q)$ splits in $\mathcal{O}_L$ and $q \leq C \log(n)$ and $a_k \not\equiv 0 \mod q$.
Indeed, enumerate $P = \{ q_1, q_2, \ldots  \}$.
Let $q_{r+1}$ be the first prime in $P$ such that $a_k \not \equiv 0 \mod q_{r+1}$.
Then $q_1 \cdots q_{r}$ divides $a_k$ and by the prime number theorem and positive density of $P$, we have that $q_{r+1} \leq M r \log(r)$ for some $M > 0$, depending only on $L$.
A similar calculation shows that there exists $M'>0$ such that $q_1 \cdots q_r \geq e^{M' r\log(r)}$.
Hence, $q_{r+1} \leq C \log(a_k)$, where $C > 0$ depends only on $L$. The claim is shown.

Write $(q) = \mathfrak{q}_1 \cdots \mathfrak{q}_{c}$ with
$|\mathcal{O}_L/\mathfrak{q}_i| = q.$
Since $q$ does not divide $a_k$ and since the integral basis $S$ gets sent to a $\F_q$-basis of $\mathcal{O}_L/(q)$, we have that $g \neq 1$ in $\mathcal{O}_L/(q)$.
Hence, there exists one $\mathfrak{q}_i$ with $g \neq 1$ in $\mathcal{O}_L/\mathfrak{q}_i$.
As the cardinality of $\mathcal{O}_L/\mathfrak{q}_i$ is equal to $q$ which is no greater than $C \log(n)$, we have the desired upper bound.
\end{proof}

\begin{pro}
\label{thecaseofO_L[1/f]}
Let $\Gamma$ be a finitely generated subgroup of $G(\mc{O}_{L,f})$, where $L$ is a number field and $f \in \Z$.
Then
\[ \DM_\Gamma \preceq n^{\dim(G)}. \]
\end{pro}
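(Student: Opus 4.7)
The plan is to bound $D_\Gamma(\gamma)$ by $O(n^{\dim(G)})$ for each nontrivial $\gamma \in \Gamma$ with $\|\gamma\|_X \leq n$, by producing an explicit finite quotient of $\Gamma$ obtained by reduction modulo a carefully chosen prime ideal $\mf{p}$ of $\mc{O}_L$ whose residue characteristic grows at most linearly in $n$.

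I will use the closed embedding $G \hrw \SL_m$ fixed in Section \ref{LowerBounds}, so $\Gamma \subset G(\mc{O}_{L,f}) \subset M_m(\mc{O}_{L,f})$. Fix a finite generating set $X$ for $\Gamma$ and an integral basis $b_1, \ldots, b_d$ of $\mc{O}_L$. There are constants $K$ and $C_0$ (depending only on $X$) such that for each $g \in X$, the matrix $f^K g$ lies in $M_m(\mc{O}_L)$ with every integral-basis coefficient of every entry bounded by $C_0$. Iterating the generators, if $\gamma$ has word length at most $n$, then $f^{Kn}\gamma \in M_m(\mc{O}_L)$ and its entries, expanded in the integral basis, have integer coefficients bounded by $C^n$ for some $C$ depending only on $X$. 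Since $\gamma \neq 1$, at least one entry $x$ of $f^{Kn}(\gamma - I)$ is a nonzero element of $\mc{O}_L$ whose integral-basis coefficients satisfy the same bound.

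Next, I will apply (the proof of) Proposition \ref{OLCase} to $x$, with a minor augmentation: the argument producing a completely split rational prime $q \leq C' \log|a_k|$ skipping primes that divide the coefficient $a_k$ also tolerates skipping the finitely many primes dividing $f$, at the cost of enlarging the constant. This yields a rational prime $p$ which (i) splits completely in $\mc{O}_L$, (ii) does not divide $f$, (iii) satisfies $p \leq C_1 \log(C^n) = C_2 n$, and (iv) admits a prime $\mf{p} \mid p$ of $\mc{O}_L$ with $x \not\equiv 0 \pmod{\mf{p}}$. Since $p \nmid f$, we have $\mc{O}_{L,f} \subset \mc{O}_{L,\mf{p}}$, so the reduction morphism of group schemes induces a group homomorphism $\pi : G(\mc{O}_{L,f}) \rw G(\mc{O}_L/\mf{p}) = G(\F_p)$. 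It sends $\gamma$ to a nontrivial element, because $x/f^{Kn}$ reduces to a nonzero entry of $\pi(\gamma) - I$. Therefore $\pi(\Gamma)$ is a finite quotient of $\Gamma$ detecting $\gamma$, and by Lemma \ref{lem:dimfin} its cardinality is at most $|G(\F_p)| \leq C_3 p^{\dim(G)} \leq C_4 n^{\dim(G)}$.

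Assembling these bounds gives $D_\Gamma(\gamma) \leq C_4 n^{\dim(G)}$ uniformly in $\gamma$, hence $\DM_\Gamma(n) \preceq n^{\dim(G)}$. No serious obstacle appears; the main technical input is already packaged into Proposition \ref{OLCase}. It is worth remarking that the argument uses only the inclusion $\Gamma \subset G(\mc{O}_{L,f})$ together with finite generation of $\Gamma$, with no metric comparison between $\Gamma$ and $G(\mc{O}_{L,f})$; this is precisely why the conclusion covers arbitrary finitely generated subgroups rather than only those of finite index.
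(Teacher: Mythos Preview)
Your proof is correct and follows essentially the same approach as the paper: bound the integral-basis coefficients of $f^{Kn}(\gamma-I)$ exponentially in $n$, invoke Proposition~\ref{OLCase} to find a completely split prime $p=O(n)$ detecting a nonzero entry, and reduce into $G(\F_p)$ using Lemma~\ref{lem:dimfin}. Your explicit augmentation of Proposition~\ref{OLCase} to skip the finitely many primes dividing $f$ is in fact a bit more transparent than the paper's ``we may assume'' at that point.
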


\begin{proof}
Recall that we have fixed a closed embedding $G \rw \tx{SL}_m$ and are identifying $G$ with its image.
Let $\mc{X}$ be a finite set of generators for $\Gamma$ as a semigroup. Let $S$ be an integral basis for $\mc{O}_L$.
We claim that there exists $\lambda>0$ such that for any $A \in \Gamma$ with $\|A\|_{\mc{X}} = n$ and any non-zero coefficient $a \in \mc{O}_{L,f}$ of $A-I$ we have
\[ \|f^ka\|_S \leq \lambda^n \]
where $k$ is the least natural number such that $f^k a \in \mc{O}_L$.

The prove the claim, let $a'=a+1$ or $a'=a$ according to whether $a$ is a diagonal coefficient or not. Thus $a'$ is a coefficient of $A$. Let $K$ be the least natural number such that for all $X \in \mc{X}$, $f^KX \in \tx{M}_m(\mc{O}_L)$. Because $A$ is a product of exactly $n$ elements of $X$, we have $f^{nK}A \in \tx{M}_m(\mc{O}_L)$, and in particular $k<nK$. Then
\[ \|f^ka\|_S \leq \|f^{nK}a\|_S \leq \|f^{nK}a'\|_S + f^{nK}\|1\|_S. \]
This reduces the above claim to the following. There exists $\mu>0$ such that for any $A \in \Gamma$ with $\|A\|_\mc{X}=n$ and any non-zero coefficient $a \in \mc{O}_{L,f}$ of $A$ we have
\[ \|f^{nK}a\|_S \leq \mu^n. \]
We claim that if $\alpha$ denotes the maximum of $\|st\|_S$, where $s,t$ range over the elements of $S$, and $\beta$ denotes the maximum of $\|f^Kx\|_S$, where $x$ ranges over all entries of all elements of $\mc{X}$, then $\mu := m \alpha\beta$ satisfies the last statement. To see this, consider first the case $A=XY$ with $X,Y \in \mc{X}$. The entries of $A$ are scalar products of the rows of $X$ and the columns of $Y$. Thus we are led to study $\|x\cdot y\|_S$ for $x,y \in \mc{O}_L^m$, where $\cdot$ denotes scalar product. Clearly we have $\|x\cdot y\|_S \leq m\max\{\|x_iy_i\|_S:1 \leq i \leq m\}$. In terms of the basis $S$ we can write
\[ x_i = \sum_{s \in S} \lambda_{x,i,s}s\qquad \tx{and}\qquad y_i = \sum_{s \in S}\lambda_{y,i,s}s \]
where the $\lambda$'s belong to $\Z$. One computes
\[ \|x_iy_i\|_S \leq \|x_i\|_S\|y_i\|_S\max\{ \|st\|_S:\ s,t \in S \}. \]
This formula and induction on $n$ complete the proof of the claim.

To complete the proof of the proposition, let $A \in \Gamma$ be such that $\|A\|_\mc{X} \leq n$. Let $a$ be a non-zero entry of $A-I$ and $k$ the least integer with $f^ka \in \mc{O}_L$. According to Proposition \ref{OLCase} and the claim above there exists a natural number $M$, independent of $n$, and a homomorphism $\phi : \mc{O}_L \rw \F_p$ such that $p<Mn$ and $\phi(f^ka) \neq 0$. For all but finitely many primes $p$, we have that $\phi(f)$ is non-zero in $\F_p$. Hence, we may assume that $\phi$ extends to a homomorphism $\phi : \mc{O}_{L,f} \rw \F_p$ and $\phi(a) \neq 0$. The image of $A$ under the induced map $G(\mc{O}_{L,f}) \to G(\F_p)$ is non-trivial.
Further, according to Lemma \ref{lem:dimfin}, there exits $M' > 0$ such that $|G(\F_p)| \leq M' p^{\dim(G)}$. Hence, $|G(\F_p)| \leq M' (M n)^{\dim(G)}.$
\end{proof}

\begin{pro} \label{pro:killtrans}
Let $K \subset \C$ be a number field, $b \in \mc{O}_K \sm \{0\}$, and $\Gamma \subset G(\C)$ a finitely generated subgroup, such that $G(\mc{O}_{K,b}) \cap \Gamma$ is of finite-index in $\Gamma$. Then there exists a finite extension $L \subset \C$ of $K$, an element $f \in \Z \sm \{0\}$, and a homomorphism $\Gamma \rw G(\mc{O}_{L,f})$ with finite kernel.
\end{pro}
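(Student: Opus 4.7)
The plan is to realize the required homomorphism as the inclusion $\Gamma \hookrightarrow G(\mc{O}_{L,f})$, which automatically has trivial (hence finite) kernel. To this end, I will show that every element of $\Gamma$ is $\overline{K}$-rational; since $\Gamma$ is finitely generated, only a single finite extension $L/K$ is needed, and clearing denominators with an integer $f$ is then routine. The key input is a Galois-theoretic orbit argument, which exploits the fact that a large subgroup of $\Gamma$ is $K$-rational.

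First I would replace $\Gamma_0 := \Gamma \cap G(\mc{O}_{K,b})$ by its normal core $N := \bigcap_{\gamma\in\Gamma}\gamma\Gamma_0\gamma^{-1}$, which is still of finite index in $\Gamma$, is contained in $G(\mc{O}_{K,b})$, and is now normal in $\Gamma$. The crucial additional assumption I will make is that $N$ is Zariski-dense in $G$; this holds in the applications of interest (via a Borel-density-type statement for Chevalley groups) since $\Gamma$ is commensurable with an arithmetic subgroup. Under Zariski density, $C_{G(\C)}(N) = Z(G(\C))$, which is finite because $Z(G)$ is a finite group scheme. Now fix $\gamma \in \Gamma$ and $\sigma \in \Aut(\C/K)$. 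Since $\sigma$ fixes $N \subset G(K)$ pointwise and $N$ is normal in $\Gamma$, for every $x \in N$ we have
\[ \sigma(\gamma)\,x\,\sigma(\gamma)^{-1} \;=\; \sigma(\gamma x \gamma^{-1}) \;=\; \gamma x \gamma^{-1}, \]
so $\gamma^{-1}\sigma(\gamma) \in C_{G(\C)}(N) = Z(G(\C))$. Thus $\sigma(\gamma)$ takes at most $|Z(G(\C))|$ distinct values as $\sigma$ varies, so each matrix entry of $\gamma$ has finite $\Aut(\C/K)$-orbit and is therefore algebraic over $K$. Hence $\gamma \in G(\overline{K})$.

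Having established $\Gamma \subset G(\overline{K})$, finite generation of $\Gamma$ ensures that the entries of a finite generating set lie in a single finite extension $L \subset \C$ of $K$, so $\Gamma \subset G(L)$. These finitely many entries share a common denominator $d \in \mc{O}_L\sm\{0\}$, and setting $f := N_{L/\Q}(d)\cdot N_{K/\Q}(b) \in \Z\sm\{0\}$ makes both $\mc{O}_L[1/d] \subset \mc{O}_{L,f}$ and $b \in \mc{O}_{L,f}^{\times}$. Using the fixed closed embedding $G \hookrightarrow \SL_m$, inverses of matrices in $G$ are polynomial in the entries, so every product of generators and their inverses lies in $G(\mc{O}_{L,f})$, giving $\Gamma \subset G(\mc{O}_{L,f})$ as required. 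The main obstacle is the Zariski-density step for $N$; in the setting where $\Gamma$ fails to be commensurable with a Zariski-dense arithmetic subgroup, one would first need to reduce to that case by passing to the Zariski closure of $N$ in $G$ and working inside its normalizer, but the Galois-orbit calculation above is the essential mechanism.
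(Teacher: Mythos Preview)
Your plan is to show $\Gamma \subset G(\overline{K})$, but this is strictly stronger than the proposition and can fail under its hypotheses. Take $G = \SL_2$, $K = \Q$, $b = 1$, and $\Gamma = \langle u, h \rangle \subset \SL_2(\C)$ with
\[ u = \begin{pmatrix} 1 & 1 \\ 0 & 1 \end{pmatrix}, \qquad h = \begin{pmatrix} i & \pi \\ 0 & -i \end{pmatrix}. \]
One checks $huh^{-1} = u^{-1}$ and $h^2 = -I$, so $\Gamma \cap \SL_2(\Z) = \langle u, -I \rangle$ has index $2$ in $\Gamma$; yet $h \notin G(\overline{\Q})$. Here the normal core is $N = \langle u, -I \rangle$, whose Zariski closure lies in a Borel, and $C_{G(\C)}(N)$ contains the whole unipotent radical $U(\C)$, so the bound $|\{\sigma(\gamma)\}| \leq |Z(G(\C))|$ breaks down. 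Your proposed fix of passing to $N_G(\overline{N})$ does not help either: in this example $N_G(\overline{N}) = B$ and $C_{B(\C)}(N)$ is still infinite. It is true that for the headline corollary on $S$-arithmetic subgroups $N$ will be Zariski-dense, but the proposition and Theorem~\ref{MainUpperBoundTheorem} are stated for the broader class of $\Gamma$ which merely have a finite-index subgroup of $\mc{O}_{K,b}$-points, and your argument does not cover that class.

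The paper does \emph{not} try to realize the homomorphism as an inclusion. It places $\Gamma$ inside $G(R)$ for a finitely generated $\mc{O}_{K,b}$-algebra $R \subset \C$, chooses a transcendence basis for $\tx{Frac}(R)$ over $K$, and specializes those transcendentals to elements of $\mc{O}_{K,b}$, producing an $\mc{O}_{K,b}$-algebra map $\phi : R \to \mc{O}_{L,f}$ and hence $\phi_* : G(R) \to G(\mc{O}_{L,f})$. Because $\phi$ is the identity on $\mc{O}_{K,b}$, the map $\phi_*|_\Gamma$ is injective on the finite-index subgroup $\Gamma \cap G(\mc{O}_{K,b})$, forcing $\ker(\phi_*|_\Gamma)$ to be finite. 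In the example above, sending $\pi \mapsto 0$ gives a homomorphism $\Gamma \to \SL_2(\Z[i])$ with trivial kernel, though $h$ is sent to $\tx{diag}(i,-i)$ rather than to itself.
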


\begin{proof}
Let $S \subset \Gamma$ be a finite generating set. There exists a field $F \subset \C$, finitely generated over $K$, such that $S \subset G(F)$. Let $t_1,...,t_n$ be a transcendence basis for $F/K$. The extension $F/K(t_1,...,t_n)$ is finitely generated and algebraic, hence finite. Let $a \in F$ be a primitive element for that extension. Thus $F=K(t_1,...,t_n,a)$. The ring $\mc{O}_{K,b}[t_1,...,t_n]$ is a free polynomial algebra over $\mc{O}_{K,b}$ with field of fractions $K(t_1,...,t_n)$. There exists $s \in \mc{O}_K[t_1,...,t_n]$ such that the coefficients of the minimal polynomial of $a$ over $K(t_1,....,t_n)$ lie in the localization $\mc{O}_{K,b}[t_1,...,t_n]_s$. Thus the element $a$ is integral over $\mc{O}_{K,b}[t_1,...,t_n]_s$ and the ring $\mc{O}_{K,b}[t_1,...,t_n]_s[a] \subset F$ has $F$ as its field of fractions. Thus there exists $r \in \mc{O}_{K,b}[t_1,...,t_n]_s[a]$, such that if we put $R=\mc{O}_{K,b}[t_1,...,t_n]_s[a]_r$, then $S \subset G(R)$, and consequently $\Gamma \subset G(R)$.

We can find a homomorphism of $\mc{O}_{K,b}$-algebras
\[ \phi : \mc{O}_{K,b}[t_1,...,t_n] \rw \mc{O}_{K,b} \]
such that $\phi(s) \neq 0$. Then $\phi$ extends to a homomorphism
\[ \phi : \mc{O}_{K,b}[t_1,...,t_n]_s \rw \mc{O}_{K,b\phi(s)}. \]
There exists a finite extension $L \subset \C$ of $K$ such that the composition of $\phi$ with the natural inclusion $\mc{O}_{K,b\phi(s)} \rw K$ extends to a homomorphism
\[ \phi : \mc{O}_{K,b}[t_1,...,t_n]_s[a] \rw L. \]
The element $\phi(a) \in L$ is integral over $\mc{O}_{K,b\phi(s)}$, and hence belongs to $\mc{O}_{L,b\phi(s)}$. Thus in fact we obtain a homomorphism
\[ \phi : \mc{O}_{K,b}[t_1,...,t_n]_s[a] \rw \mc{O}_{L,b\phi(s)}. \]
We consider $\phi(r) \in \mc{O}_{L,b\phi(s)}$. Perturbing $\phi$ slightly if necessary, we may assume
that $\phi(r) \neq 0$. In this way we obtain a homomorphism of $\mc{O}_K$-algebras
\[ \phi : R \rw \mc{O}_{L,b\phi(rs)}. \]
The algebra homomorphism $\mc{O}_L \otimes_\Z \Q \rw L$
given by multiplication is an isomorphism. Since $\Q = \varinjlim\limits_{f \in \Z} \Z_f$, we conclude that
\[ L \cong \varinjlim_{f \in \Z} \mc{O}_L \otimes_\Z \Z_f \cong \varinjlim_{f \in \Z} \mc{O}_{L,f}. \]
Thus there exists some $f \in \Z$ such that $[b\phi(rs)]^{-1} \in \mc{O}_{L,f}$. Composing $\phi$ with the inclusion $\mc{O}_{L,b\phi(rs)} \rw \mc{O}_{L,f}$ we finally arrive at a homomorphism of $\mc{O}_{K,b}$-algebras
\[ \phi : R \rw \mc{O}_{L,f}. \]
It induces a group homomorphism $\phi_* : G(R) \rw G(\mc{O}_{L,f})$ which fits into the commutative diagram

\begin{diagram}[small]
G(R)&&\rTo^{\phi_*}&&G(\mc{O}_{L,f})\\
&\luTo&&\ruTo\\
&&G(\mc{O}_{K,b})&&
\end{diagram}
The restriction of $\phi_*$ to $\Gamma$ is the desired homomorphism: Its kernel has trivial intersection with $G(\mc{O}_{K,b})$, i.e. it avoids a finite-index subgroup of $\Gamma$, and hence must be finite.
\end{proof}

\begin{cor} Let $\Gamma \subset G(\C)$ be a finitely generated subgroup. Assume that there exists a finite extension $K \subset \C$ of $\Q$ and $b \in \mc{O}_K \sm \{0\}$ such that $G(\mc{O}_{K,b}) \cap \Gamma$ is of finite-index in $\Gamma$. Then
\[ F_\Gamma(n) \preceq n^{\dim(G)}. \]
\end{cor}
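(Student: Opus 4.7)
The plan is to assemble the corollary from the three preceding propositions, using the asymptotic comparison provided by Lemma \ref{DivisibilityAsymptoticLemma}. The hypotheses of the corollary match exactly those of Proposition \ref{pro:killtrans}, so the first step is to invoke that proposition to obtain a finite extension $L \subset \C$ of $K$, an element $f \in \Z \sm \{0\}$, and a homomorphism $\phi : \Gamma \rw G(\mc{O}_{L,f})$ whose kernel is finite.

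Next, I would let $\Gamma' := \phi(\Gamma) \subset G(\mc{O}_{L,f})$. Since $\Gamma$ is finitely generated, so is $\Gamma'$. The homomorphism $\phi$ factors as a surjection $\Gamma \thrw \Gamma'$ with finite kernel, so the second bullet of Lemma \ref{DivisibilityAsymptoticLemma} applies and yields
\[ \DM_\Gamma \preceq \DM_{\Gamma'}. \]
(We only need the easy direction of that bullet, i.e. the one that does not require linearity or centrality of the kernel.)

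Finally, since $\Gamma'$ is a finitely generated subgroup of $G(\mc{O}_{L,f})$, Proposition \ref{thecaseofO_L[1/f]} gives
\[ \DM_{\Gamma'}(n) \preceq n^{\dim(G)}. \]
Chaining these two inequalities produces $\DM_\Gamma(n) \preceq n^{\dim(G)}$, which is the desired upper bound.

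There is no serious obstacle here: the hard work has already been done in Propositions \ref{OLCase}, \ref{thecaseofO_L[1/f]}, and \ref{pro:killtrans}. The only point that requires a moment of thought is to confirm that Lemma \ref{DivisibilityAsymptoticLemma} indeed applies to $\phi : \Gamma \thrw \Gamma'$, i.e. that $\Gamma'$ is itself finitely generated and residually finite (the latter holds because it is linear over a number ring). Once these standard observations are in place, the corollary is immediate.
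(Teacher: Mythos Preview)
Your proposal is correct and follows exactly the paper's approach: apply Proposition \ref{pro:killtrans} to obtain the map with finite kernel, use the easy direction of the second bullet of Lemma \ref{DivisibilityAsymptoticLemma} to pass to the image, and then invoke Proposition \ref{thecaseofO_L[1/f]}. The paper's proof is the one-line citation of precisely these three ingredients, and your expanded version fills in the details accurately.
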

\begin{proof}
This follows immediately from Proposition \ref{pro:killtrans}, Lemma \ref{DivisibilityAsymptoticLemma} and Proposition \ref{thecaseofO_L[1/f]}.
\end{proof}


\noindent
Dept. of Mathematics, University of Chicago \\
5734 University Ave. Chicago, IL 60637 \\
E-mails: khalid@math.uchicago.edu, tkaletha@math.uchicago.edu

\end{document}